  \newtheorem*{theorem*}        {Theorem}
  \newtheorem*{conjecture*}   {Conjecture}
  \newtheorem{theorem}           {Theorem}
  \newtheorem{lemma}              {Lemma}
  \newtheorem*{lemma*}          {Lemma}
  \newtheorem{corollary}          {Corollary}
  \newtheorem{proposition}      {Proposition}
    \newtheorem{remark}          {Remark}
\definecolor{Red}{cmyk}{0,1,1,0}
\definecolor{Blue}{cmyk}{1,1,0,0}
\newcommand{\ba}{\begin{array}}
\newcommand{\ea}{\end{array}}
\newcommand{\be}{\begin{equation}}
\newcommand{\ee}{\end{equation}}
\newcommand{\ben}{\begin{enumerate}}
\newcommand{\een}{\end{enumerate}}
  \def\exp{\mathop{\textrm{\rm exp}}\nolimits}                    
   \def\d{\mathop{\textrm{\rm d}}\nolimits}                        
\newcommand{\R}{\mathbb{R}}
\newcommand{\Z}{\mathbb{Z}}
\newcommand{\N}{\mathbb{N}}
\newcommand{\cG}{\mathcal{G}}
\newcommand{\cGd}{\mathcal{G}_\bullet}
\newcommand{\cH}{\ensuremath        {\mathcal{H}}}
\newcommand{\bP}{\ensuremath        {\mathbb{P}}}
\newcommand\smallO{
  \mathchoice
    {{\scriptstyle\mathcal{O}}}
    {{\scriptstyle\mathcal{O}}}
    {{\scriptscriptstyle\mathcal{O}}}
    {\scalebox{.7}{$\scriptscriptstyle\mathcal{O}$}}
  }
\newlength{\dhatheight}
\newcommand{\doublehat}[1]{%
    \settoheight{\dhatheight}{\ensuremath{\hat{#1}}}%
    \addtolength{\dhatheight}{-0.35ex}%
    \hat{\vphantom{\rule{1pt}{\dhatheight}}%
    \smash{\hat{#1}}}}
\begin{document}

\title{Local limits of spatial Gibbs random graphs}
\author{
Eric Ossami Endo\thanks{Supported by FAPESP Grants 14/10637-9 and 15/14434-8.} \\
\footnotesize{Department of Applied Mathematics}\\
\footnotesize{\texttt{eric@ime.usp.br}}\\
\footnotesize{Institute of Mathematics and Statistics - IME USP - University of S\~ao Paulo}\\
\footnotesize{Johann Bernoulli Institute for Mathematics and Computer Science - University of Groningen}\\
\\[0.3cm]
Daniel Valesin \\
\footnotesize{Department of Mathematics}\\
\footnotesize{\texttt{D.Rodrigues.Valesin@rug.nl}}\\
\footnotesize{Johann Bernoulli Institute for Mathematics and Computer Science - University of Groningen}
\\
}
\maketitle

\begin{abstract}
We study the spatial Gibbs random graphs introduced in \cite{mv} from the point of view of local convergence. These are random graphs embedded in an ambient space consisting of a line segment, defined through a probability measure that favors graphs of small (graph-theoretic) diameter but penalizes the presence of edges whose extremities are distant in the geometry of the ambient space. In \cite{mv} these graphs were shown to exhibit threshold behavior with respect to the various parameters that define them; this behavior was related to the formation of hierarchical structures of edges organized so as to produce a small diameter. Here we prove that, for certain values of the underlying parameters, the spatial Gibbs graphs may or may not converge locally, in a manner that is compatible with the aforementioned hierarchical structures.
\end{abstract}

{\footnotesize{\bf Keywords:} Random graphs, Gibbs measures, local convergence}

{\footnotesize {\bf Mathematics Subject Classification (2000):} 82C22; 05C80}

\section{Introduction}
In \cite{mv}, the authors introduced and studied a class of random graphs which they called \textit{spatial Gibbs random graphs}. These are random graphs embedded in an ambient space, which in \cite{mv}, was a finite line segment. They are distributed according to a measure that penalizes the presence of edges whose extremities are distant (in terms of the ambient space geometry), but also penalizes graphs with large graph-theoretic diameter. Graphs sampled from this measure may thus be thought of as answering to a compromise between the conflicting requirements of using few long edges and having vertices close to each other in graph distance. The main result of \cite{mv} describes the typical aspect of these graphs as a function of the various parameters that define them. Here, we continue the study of spatial Gibbs random graphs on line segments by considering their local convergence properties. 

Let us explain the definition of spatial Gibbs random graphs and briefly present the results of \cite{mv}. Define the set of \textit{graphs on $\mathbb{Z}$} as
$$\cG = \{g=(V,E):V \subset \mathbb{Z} \text{ and } g \text{ is locally finite}\}.$$
Given a graph $g=(V,E)\in \mathcal{G}$ and two vertices $x,y\in V$, the \emph{distance between $x$ and $y$ in $g$} is the smallest length over all paths in $g$ with endpoints $x$ and $y$. We denote this distance by $\d_g(x,y)$. Let $p \in [1,\infty]$; in case $g = (V, E) \in \cG$ is finite, we define
 $$\mathcal{H}_p(g) =\begin{cases}\left( {\displaystyle \frac{1}{\binom{N}{2}}\sum_{\substack{x,y\in V:\\ x<y}}
}(\d_g(x,y))^p \right)^{\frac{1}{p}}&\text{if } p \in [1,\infty);\\[.8cm] \sup\left\{ \d_g(x,y):\ x,y\in V \right\}&\text{if } p = \infty, \end{cases} $$
 that is, $\mathcal{H}_\infty(g)$ is the graph-theoretic diameter of $g$ and, if $p \in [1,\infty)$, $\mathcal{H}_p(g)$ is a measure of typical distances in $g$.

For each $N\ge 1$ and $\gamma>0$, let $\bP_{N,\gamma}$ be the probability measure on $\cG$ supported on graphs $g=(V,E)$ with $V=[N]:=\{1,\ldots,N\}$ and $E\supset \{\{x,y\}:x,y\in V, |x-y|=1\}$, and so that the events \begin{equation}\label{eq:prob_edges}\{\{x,y\}\in E\}_{x,y\in V, |x-y|>1}\end{equation} are independent, each having probability\begin{equation}\label{eq:prob_p_e}p_{\{x,y\}}=\exp\{-|x-y|^{\gamma}\}.\end{equation} We think of $\bP_{N,\gamma}$ as a ``reference measure'' which we multiply by a Gibbs-type weight, thus obtaining a measure
\begin{equation}\label{def:prob}
\mathbb{P}^{b,p}_{N,\gamma}(g)=\frac{1}{Z^{b,p}_{N,\gamma}}\cdot \exp\{-N^{b}\cdot  \mathcal{H}_p(g)\}\cdot \mathbb{P}_{N,\gamma}(g),\quad g\in \cG,
\end{equation}
where $b \in \mathbb{R}$, $p \in [1,\infty]$ and $Z^{b,p}_{N,\gamma}$ is the normalization constant. In summary, this measure has four parameters: $N \in \N$ is the number of vertices of graphs over which it is supported, $\gamma > 0$ controls the probabilities of the presence of edges in the reference measure, $p \in [1,\infty]$ determines the notion of typical distance that is used, and $b \in \R$ controls the sensitivity of the measure to the value of the typical distance. We denote by $G_N$ a random graph sampled from  $\mathbb{P}_{N,\gamma}$ or  $\mathbb{P}^{b,p}_{N,\gamma}$, depending on the context.

Under the reference measure $\mathbb{P}_{N,\gamma}$, the geometry of the random graph $G_N$ is not too different from that of the line segment on $[N]$. Indeed, using a simple analysis of ``cutpoints'' carried out in \cite{mv}, it is not hard to show that, if $\varepsilon > 0$ is small enough,
\begin{equation}
\label{eq:cut} \lim_{N\to\infty} \mathbb{P}_{N,\gamma}\left(\mathcal{H}_p(G_N) < \varepsilon N\right) = 0.
\end{equation}
This changes drastically by the introduction of the Gibbs weight (at least if the parameter $b$ is large enough). The main result of \cite{mv}, reproduced as Theorem \ref{thm:old} below, is the convergence in probability of the random variable $N^{-1}\log \mathcal{H}_p(G_N)$ under $\bP_{N,\gamma}^{b,p}$, when $\gamma,b,p$ are fixed and $N$ is taken to infinity. The limit is deterministic and given explicitly as a function of the parameters. Not all triples $(\gamma,b,p) \in (0,\infty) \times \mathbb{R} \times [1,\infty]$ are covered by the theorem: the case $\gamma = 1$ is technically challenging and the proof of convergence for certain values of $(b,p)$ in that case is still missing. To identify this set of values, define for each $p \in [1,\infty]$:
$$\mathcal{E}_p = \begin{cases}\bigcup_{k=1}^\infty \left[\frac{k-1}{k},\;\frac{k-1}{k} + \left(0 \vee \frac{2p-(p-1)k}{k(k+1)(k+2p)}  \right)\right] & \text{if } p < \infty;\\[.2cm][0,\frac14] \cup \bigcup_{k=2}^\infty \left\{\frac{k-1}{k} \right\}&\text{if } p = \infty. \end{cases}$$
This set is plotted on Figure \ref{fig:ep}. We are now ready to state:

\begin{figure}[htb]
\begin{center}
\setlength\fboxsep{0pt}
\setlength\fboxrule{0.5pt}
\fbox{\includegraphics[width = \textwidth]{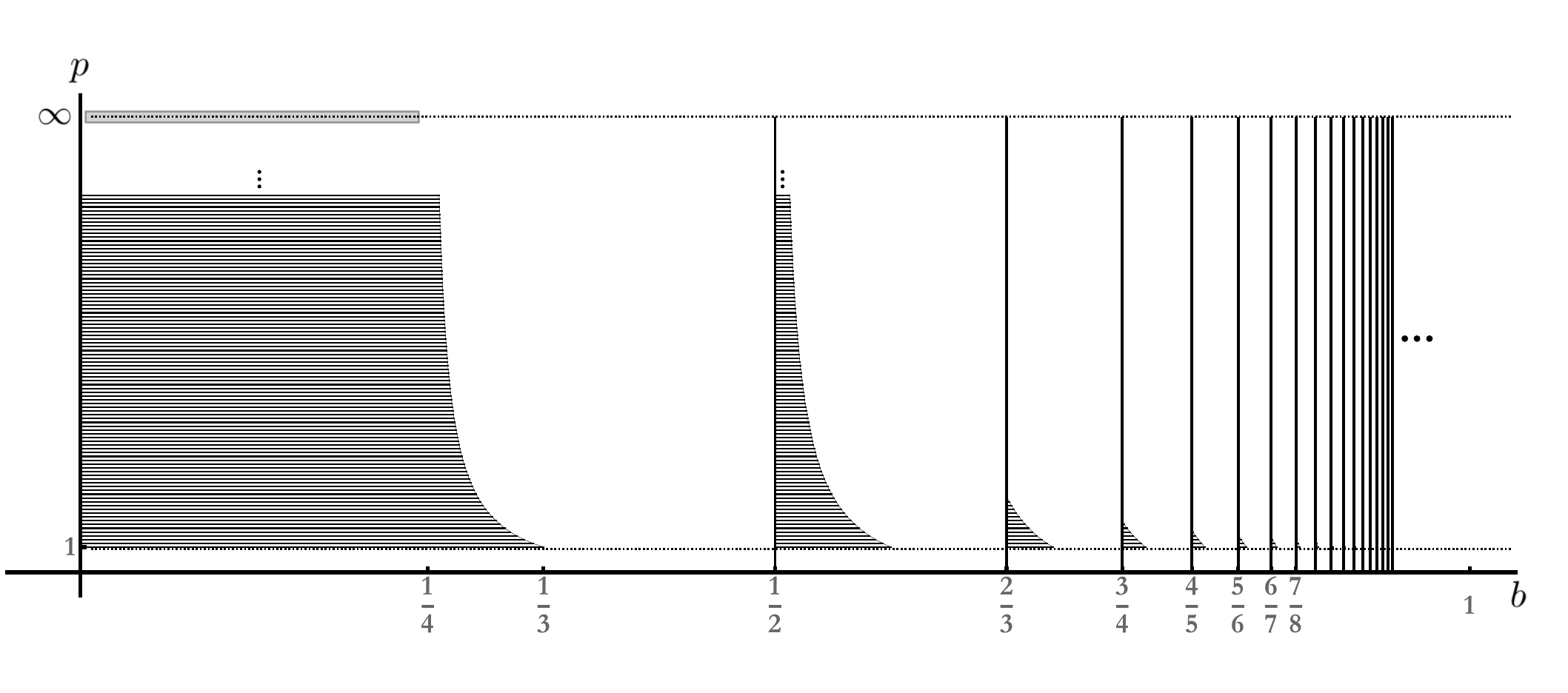}}
\end{center}
\caption{For each $p \in [1,\infty]$, the dark region represents the set $\mathcal{E}_p$, that is, the values of $b$ for which Theorem \ref{thm:old} does not cover the pair $(b,p)$ if $\gamma = 1$. Note that, unless $p = 1$, $\mathcal{E}_p$ only includes finitely many intervals and all numbers of the form $\frac{k-1}{k}$, $k \in \mathbb{N}$. }
\label{fig:ep}
\end{figure}

\begin{theorem}[\cite{mv}]
\label{thm:old} In case
\begin{equation*}
\text{either } \gamma \neq 1,\; p \in [1,\infty],\; b \in \R \qquad \text{or } \gamma = 1,\; p \in [1,\infty],\; b \in \R \backslash \mathcal{E}_p,
\end{equation*}
for any $\varepsilon > 0$,
\begin{equation}
\label{eq:main_eq}
\bP_{N,\gamma}^{b,p}\left(N^{\alpha^*-\varepsilon} < \mathcal{H}_p(G_N) < N^{\alpha^*+ \varepsilon} \right) \xrightarrow{N \to \infty} 1,
\end{equation}
where
\begin{equation*}
\alpha^* = \alpha^*(\gamma, b) = \begin{cases} \left(\frac{1-b}{2-\gamma} \wedge 1\right) \vee 0& \text{if } \gamma \in (0,1);\\[.2cm]
\left(\frac{\gamma - b}{\gamma} \wedge 1\right) \vee 0&\text{if } \gamma > 1;\\[.2cm]
\mathds{1}_{(-\infty,0)}(b) + \sum_{k=1}^\infty \frac{1}{k+1}\cdot \mathds{1}_{\left[\frac{k-1}{k},\frac{k}{k+1}\right)}(b) &\text{if } \gamma = 1. \end{cases}
\end{equation*}
\end{theorem}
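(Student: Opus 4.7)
My plan is to establish \eqref{eq:main_eq} by proving sharp two-sided estimates on the partition function $Z^{b,p}_{N,\gamma}$: the direction $\bP^{b,p}_{N,\gamma}(\mathcal{H}_p(G_N) > N^{\alpha^*+\varepsilon}) \to 0$ will follow from a lower bound on $Z^{b,p}_{N,\gamma}$ produced by an explicit construction, and the direction $\bP^{b,p}_{N,\gamma}(\mathcal{H}_p(G_N) < N^{\alpha^*-\varepsilon}) \to 0$ from an upper bound on the contribution of ``atypically short'' graphs obtained by structural and combinatorial arguments. The framework is energy-versus-entropy: each long edge $\{x,y\}$ contributes an energy $\sim |x-y|^\gamma$ through the reference measure \eqref{eq:prob_p_e}, while $\mathcal{H}_p$ contributes through the Gibbs factor $\exp(-N^b \mathcal{H}_p(g))$.

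For the lower bound I would exhibit a \emph{hierarchical} graph $g^\star$ on $[N]$. Fix an integer $K$ and scales $1 \le L_1 < L_2 < \cdots < L_K \le N$; on top of the nearest-neighbour backbone (which is forced by the support of $\bP_{N,\gamma}$), for each level $k$ add $\sim N/L_k$ ``shortcut'' edges of length $\sim L_k$ connecting consecutive blocks of size $L_k$. Then $\mathcal{H}_p(g^\star) \lesssim K\cdot L_1$, and the reference probability of including exactly these edges is at least $\exp(-\sum_k (N/L_k)\cdot L_k^\gamma) = \exp(-\sum_k N L_k^{\gamma-1})$, up to a negligible factor for the exclusion of every other long edge (this uses that $\sum_{\ell \ge 2} e^{-\ell^\gamma}$ is finite, modulo a $\gamma = 1$ correction). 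Combined with the Gibbs weight, minimizing $\sum_k N L_k^{\gamma-1} + N^b K L_1$ over $K$ and the $L_k$ produces the three cases in the theorem: for $\gamma>1$ only the coarsest level pays and $\alpha^* = (\gamma-b)/\gamma$; for $\gamma<1$ finer levels are cheap, $K$ can be taken large and $\alpha^* = (1-b)/(2-\gamma)$; for $\gamma=1$ every level costs exactly $N$, so the optimal hierarchy uses a specific finite number $k$ of levels and yields $\alpha^* = 1/(k+1)$ on $[\tfrac{k-1}{k},\tfrac{k}{k+1})$.

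For the matching upper bound I would cover the event $\{\mathcal{H}_p(G_N) < D\}$ with $D = N^{\alpha^*-\varepsilon}$ by a union over possible ``shortcut profiles'' of $g$, i.e.\ over the multiset of lengths of edges of length $>1$. A quantitative version of the cutpoint idea behind \eqref{eq:cut} shows that any graph with $\mathcal{H}_p(g) \le D$ must contain enough shortcuts, at appropriate scales, to bridge each cluster of $\sim D$ consecutive vertices; this forces a profile whose reference cost already dominates $\exp(-N^{\alpha^*+o(1)}\vee N^{1+b+o(1)})$. Summing these contributions, weighted by $\exp(-N^b D)$, and dividing by the lower bound on $Z^{b,p}_{N,\gamma}$ from the previous paragraph, gives a probability that tends to zero.

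The main obstacle is the $\gamma = 1$ regime. There every level of the hierarchy pays the same order $N$, so separating the optimal depth $k$ from its competitors requires bookkeeping of prefactors and subleading $\log N$ corrections; the thresholds $b = (k-1)/k$ arise precisely as the transitions at which the winning $k$ changes, and between consecutive thresholds one must rule out ``hybrid'' constructions of effective non-integer depth. The excluded set $\mathcal{E}_p$ singles out those $b$ for which the comparison between $k$- and $(k+1)$-level hierarchies is too tight for the above union bound to be conclusive (including all boundary values $b = (k-1)/k$), so handling $\mathcal{E}_p$ would require a finer, $p$-dependent second-order analysis that I do not attempt; outside $\mathcal{E}_p$ the separation is uniform in $N$ and the scheme outlined above goes through.
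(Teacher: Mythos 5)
First, a point of framing: the paper itself does not prove Theorem~\ref{thm:old}. It is stated as imported from \cite{mv}, and the only material in this paper about its proof is the heuristic exposition in the introduction (the description of the layer sets $\mathscr{E}_{N,\ell}$ and the three constructions of $g^\star$, plus the remark that matching upper bounds are obtained for $\bP_{N,\gamma}(\mathcal{H}_p(G_N)\le N^\alpha)$). So there is no paper proof to line your argument up against in detail; I can only compare your sketch against that description, and against the two lemmas the paper later imports from \cite{mv} (Lemmas~\ref{lem:VM} and~\ref{prop:VM}).

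At the level of strategy your sketch does match the description in the paper: estimate the reference-measure large-deviation probability $\bP_{N,\gamma}(\mathcal{H}_p(G_N)\le N^\alpha)$ by a matching pair of bounds, with the lower bound coming from forcing a hierarchical graph $g^\star$ to be a subgraph, then transfer to $\bP^{b,p}_{N,\gamma}$ by balancing the reference cost against the Gibbs factor. However, there are concrete errors in the energy-versus-entropy computation you set up. Your diameter bound $\mathcal{H}_p(g^\star)\lesssim K\,L_1$ implicitly assumes the coarsest scale satisfies $L_K\approx N$: only then is traversal dominated by the descent to the finest scale $L_1$. That is exactly the situation for $\gamma\in(0,1)$, where the optimal $g^\star$ (and the one described in the paper, $\mathscr{E}_{N,1}\cup\bigcup_{j=0}^{i}\mathscr{E}_{N,N2^{-j}}$) is built ``from the top'' and has $L_K\approx N$. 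But for $\gamma>1$ the optimal $g^\star$ is built ``from the bottom,'' $\bigcup_{j=0}^{i}\mathscr{E}_{N,2^j}$, with top scale $L_K\approx N^{1-\alpha^*}\ll N$; including a scale-$N$ layer would already cost $\exp(-N^\gamma)$, which is never affordable for $b<\gamma$. In that regime the diameter is governed by $N/L_K$, not by $K L_1$, so the objective $\sum_k N L_k^{\gamma-1}+N^bKL_1$ that you propose to minimize is not the right functional; it does not yield $\alpha^*=(\gamma-b)/\gamma$, even though you correctly quote that value. Relatedly, ``for $\gamma<1$ finer levels are cheap'' is backwards: the per-layer cost exponent is $N L^{\gamma-1}$, which for $\gamma<1$ \emph{increases} as $L$ decreases, i.e.\ fine levels are the expensive ones (they are the bottleneck at $L_1\approx N^{\alpha^*}$), while for $\gamma>1$ coarse levels are the expensive ones. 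Finally, the upper bound paragraph is a placeholder: ``enough shortcuts to bridge each cluster'' is not the kind of statement that yields a cost matching the lower bound, and the paper's imported Lemma~\ref{prop:VM} (any $g$ with $\mathcal{H}_p(g)\le N^\alpha$, $\alpha<1/k$, must have $\sum_{|e|\ge N^\delta}|e|\ge kN-o(N)$) shows how specific and quantitative the needed input actually is. You correctly flag $\gamma=1$ and the exclusion set $\mathcal{E}_p$ as the hardest part and do not attempt it, which is fair, but that together with the $\gamma>1$ miscalculation leaves real gaps in the proposal as written.
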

Note that the theorem identifies a ``transition window'' for the parameter $b$, given by the intervals $(-1+\gamma,1)$, $(0,\gamma)$ and $(0,1)$ respectively in the cases $\gamma \in (0,1)$, $\gamma > 1$ and $\gamma = 1$. See Figure \ref{fig:al}.

\begin{figure}[htb]
\begin{center}
\setlength\fboxsep{0pt}
\setlength\fboxrule{0.5pt}
\fbox{\includegraphics[width = \textwidth]{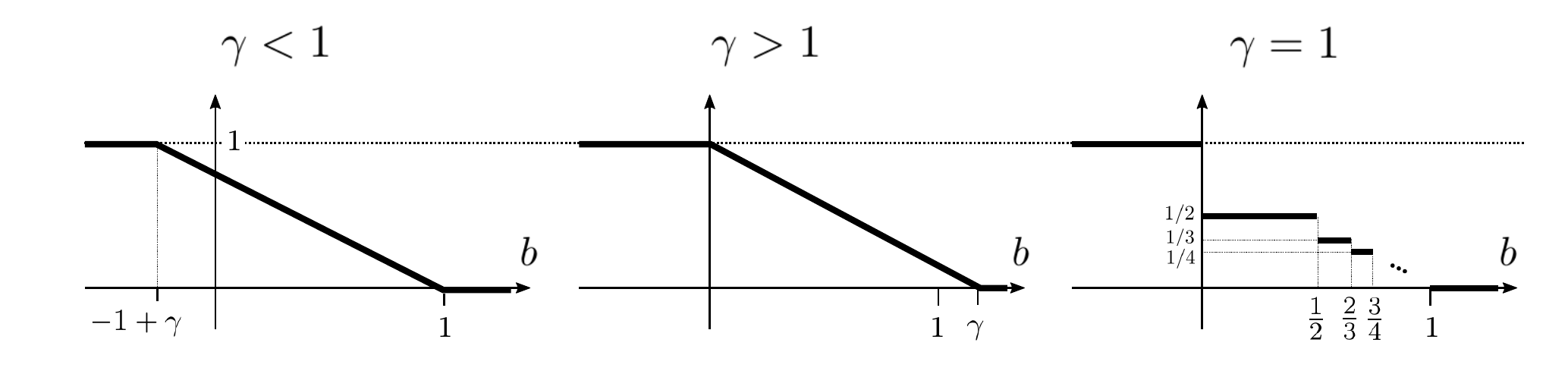}}
\end{center}
\caption{Plot of the function $b\mapsto \alpha^*(\gamma,b)$ of Theorem \ref{thm:old} for the three cases $\gamma \in (0,1)$, $\gamma > 1$ and $\gamma = 1$. }
\label{fig:al}
\end{figure}

In order to motivate our results, it is useful to give a brief exposition of what is involved in the proof of Theorem \ref{thm:old}, carried out in \cite{mv}. Most of the work involves studying the reference measure; specifically, estimating $\mathbb{P}_{N,\gamma}(\mathcal{H}_p(G_N) \leq N^\alpha)$ as $N \to \infty$ for all values of $\alpha \in (0,1)$. Upper and lower bounds whose orders roughly match are obtained for these probabilities. To obtain a lower bound, the authors exhibit a graph $g^\star = g^\star(N,\gamma,\alpha)$ with $\mathcal{H}_p(g^\star)$ close to $N^\alpha$ and use the inequality
$$\mathbb{P}_{N,\gamma}(\mathcal{H}_p(G_N) \leq N^\alpha) \geq \mathbb{P}_{N,\gamma}(g^\star \text{ is a subgraph of } G_N).$$
The definition of $g^\star$ is completely different for the three cases $\gamma \in (0,1)$, $\gamma > 1$ and $\gamma =1$. In order to explain it, let us define, for $N \in \mathbb{N}$ and $\ell \in [N]$, the ``layer'' of edges 
$$\mathscr{E}_{N,\ell} = \{\{1,1+\ell\},\{1+\ell,1+2\ell\},\ldots, \{1+(k-1)\ell,1+k\ell\},\{1+k\ell,N\}\},$$
where $k\geq 0$ is the integer satisfying $1+k\ell < N$, $1+(k+1)\ell \geq N$. Then, $g^\star = ([N],E)$ is defined as follows:
\begin{itemize}
\item in case $\gamma \in (0,1)$, $E = \mathscr{E}_{N,1} \cup \bigcup_{j=0}^i \mathscr{E}_{N,N2^{-j}}$, where $i$ is the smallest integer with $N2^{-i} < N^{1-\alpha}$;
\item in case $\gamma > 1$, $E = \bigcup_{j=0}^i \mathscr{E}_{N,2^j}$, where $i$ is the smallest integer with $2^i > N^\alpha$;
\item in case $\gamma = 1$, $E = \mathscr{E}_{N,1} \cup \bigcup_{j=1}^{i-1} \mathscr{E}_{N,N^{j/i}} $, where $i \geq 2$ is the integer such that $\alpha \in \left(\frac{1}{i},\frac{1}{i-1}\right)$.
\end{itemize}
In all three cases, the layers which constitute $g^\star$ form hierarchical or fractal structures; they are added ``from the top'', ``from the bottom'' and ``from the middle'', respectively, when $\alpha < 1$, $\alpha > 1$ and $\alpha = 1$. See Figures 2 and 5 in \cite{mv} for depictions of these graphs. The proof of the matching upper bound does not quite establish that the mentioned fractal structures are likely to be present in $G_N$. However, it does show that, in agreement with the definition of $g^\star$, the large-deviation event $\{\mathcal{H}_p(G_N) \leq N^\alpha\}$ with $\alpha \in (0,1)$  is most likely to occur due to a coordinated presence of long edges in case $\gamma \leq 1$ and a coordinated presence of short edges in case $\gamma > 1$.

As already mentioned, in this paper we consider the local picture of the spatial Gibbs random graphs. The standard topology for local graph convergence is the one introduced by Benjamini and Schramm in \cite{bs}. This topology involves comparing rooted graphs by asking whether there are graph automorphisms between balls of different radii around the roots. Since here we consider  graphs on $\Z$, the vertices of our graphs are labeled by natural numbers, so it makes sense to modify the Benjamini-Schramm convergence so as to demand that the automorphisms between balls respect the relative positions of the labels. This modification produces a finer topology (that is, if a sequence of rooted graphs converges in the sense to be given below, then it converges in the sense of \cite{bs}). Let us also mention that \cite{bps} also deals with an example of local convergence of rooted graphs endowed with labels or marks.

We now explain the ideas of the previous paragraph precisely. The set of \textit{rooted graphs on $\mathbb{Z}$} is defined by
$$\cGd = \left\{(g,o):\;g\in\mathcal{G},\;o \text{ is a vertex of } g\right\}.$$
For $o,o'\in \mathbb{Z}$, let $\varphi_{o,o'}:\mathbb{Z}\to \mathbb{Z}$ be the translation $$\varphi_{o,o'}(x)=x-o+o'.$$ 
With abuse of notation, for a rooted graph $(g,o)\in \cGd$ with $g=(V,E)$, and $o'\in \mathbb{Z}$, we define $\varphi_{o,o'}(g,o)=((V_{\varphi},E_{\varphi}),o')\in \cGd$ as the rooted graph with
\begin{align*}
&V_{\varphi}=\varphi_{o,o'}(V),\quad E_{\varphi}=\{\{\varphi_{o,o'}(x),\varphi_{o,o'}(y)\}:\{x,y\}\in E\}.
\end{align*}
For $g = (V,E)\in \cG$, $o \in V$, $g' = (V',E')\in \cG$ and $o' \in V'$, we write $(g,o) \simeq (g',o')$ if  $\varphi_{o,o'}(g,o)=(g',o')$.

Given $R>0$ and $(g,o)\in \cGd$ with $g=(V,E)$, a \emph{ball} with center $o$ and radius $R$ in $g$ is the rooted graph $B_{(g,o)}(R)=((V_B,E_B),o)\in \cGd$ of $g$ with
\begin{align*}
&V_B=\{x\in V:\d_g(o,x)\le R\},\quad E_B=\{\{x,y\}\in E:\d_g(o,x)\le R \text{ and }\d_g(o,y)\le R\}.
\end{align*}
A sequence $(g_n,o_n) \in \cGd$ is defined to converge to $(g,o) \in \cGd$ in case
\begin{equation} \forall R \; \exists n_0: \; n \geq n_0 \; \Longrightarrow\; B_{(g_n,o_n)}(R) \simeq B_{(g,o)}(R).
\end{equation}
The associated notion of convergence in distribution is as follows. Given a sequence of random rooted graphs $(G_n,\smallO_n)$ defined under the probability measure $\mu_n$ and a random rooted graph $(G,\smallO)$ defined under the probability measure $\mu$, the sequence $(G_n,\smallO_n)$ converges in distribution to $(G,\smallO)$ if for all $R>0$, and for any deterministic rooted graph $(g,o)\in \cGd$, we have
$$
\lim_{n\to\infty} \mu_n(B_{(G_n,\smallO_n)}(R) \simeq (g,o)) =  \mu(B_{(G,\smallO)}(R) \simeq (g,o)).
$$
Let us give an example that will be useful for the statement of our main result. Let $\mathbb{P}_\gamma$ be the measure on $\mathcal{G}$ supported on graphs $g= (V,E)$ with $V = \mathbb{Z}$ and $E \supset \{\{x,y\}: x,y \in V,\;|x-y| = 1\}$, and so that the events as in \eqref{eq:prob_edges} are independent, with probabilities as in \eqref{eq:prob_p_e}. If $G_N$ is sampled from $\bP_{N,\gamma}$, $G$ is sampled from $\bP_\gamma$ and $a_N$ is a sequence with $1 \ll a_N \ll N$, then it is easy to see that $(G_N, a_N)$ converges in distribution to $(G,0)$.

We now state our main result.
\begin{theorem}\label{thm:main}
Assume $p \in [1,\infty]$ and either of the following conditions hold:
\begin{equation}\label{eq:assumption}[\gamma \in(0,1),\;b \in (-\infty, 1)],\quad [\gamma = 1,\; b \in (-\infty, 1) \backslash \mathcal{E}_p], \quad \text{or } \quad [\gamma > 1,\;  b \in (-\infty, 0)].\end{equation}
Let  $\mathcal{U}_N$ be the uniform measure on $\{1,\ldots,N\}$. Then, $(G_N, \mathcal{O}_N)$ sampled from $\mathbb{P}_{N,\gamma}^{b,p} \otimes \mathcal{U}_N$ converges in distribution to $(G, \mathcal{O})$ sampled from $\mathbb{P}_\gamma \otimes \delta_{\{0\}}$.
\end{theorem}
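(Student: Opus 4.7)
The plan is to reduce local convergence at a uniformly chosen root to a comparison between the Gibbs measure $\bP_{N,\gamma}^{b,p}$ and the reference measure $\bP_{N,\gamma}$, restricted to events depending on edges in a bounded window around the root. This reduction is useful because $\bP_{N,\gamma}$ already has $\bP_\gamma$ as its local limit: as noted just before the theorem statement, the edge marginals are independent under the reference and, restricted to a window inside the bulk of $[N]$, translation invariant.

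Fix $R>0$ and a finite rooted graph $(h,o)$. Since edge probabilities $e^{-|x|^\gamma}$ are summable, $G$ is locally finite $\bP_\gamma$-almost surely with enough decay that the vertex set of $B_{(G,0)}(R)$ lies in $[-M,M]$ with probability at least $1-\varepsilon$ for $M$ large. Under $\mathcal{U}_N$, the root $\mathcal{O}_N$ falls in the bulk $[M+1,N-M]$ up to an $O(M/N)$ error, and for such $x=\mathcal{O}_N$ the event ``$B_{(G_N,x)}(R)\simeq(h,o)$ and its vertex set is contained in $[x-M,x+M]$'' is measurable with respect to the edges with both endpoints in $W=[x-M,x+M]$. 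The theorem therefore follows from the estimate
\begin{equation*}
\bP_{N,\gamma}^{b,p}(A) - \bP_{N,\gamma}(A) \longrightarrow 0 \quad\text{as } N\to\infty
\end{equation*}
for any event $A$ in the $\sigma$-algebra generated by the edges with both endpoints in $W$.

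To prove this estimate, condition on the edges outside $W$ and analyze the conditional Radon--Nikodym derivative $\Psi(e^{in})/\bar\Psi$ of $\bP_{N,\gamma}^{b,p}$ with respect to $\bP_{N,\gamma}$, where $\Psi(e^{in}) = \mathbb{E}_{N,\gamma}[e^{-N^b\mathcal{H}_p(G_N)}\mid E^{in}=e^{in}]$ and $\bar\Psi$ is its average under $\bP_{N,\gamma}$. The key geometric input is that varying $e^{in}$ changes $\mathcal{H}_p(G_N)$ by at most $CM^2$: any removed edge of length $\ell\leq 2M$ can be replaced by the always-present nearest-neighbor path of the same length, and a simple path uses at most $|W|$ edges inside $W$. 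For $b\leq 0$ this immediately closes the argument, since $|\log(\Psi(e^{in})/\bar\Psi)| \leq N^bCM^2 \to 0$ uniformly in $e^{in}$; this handles all parameters allowed when $\gamma>1$ and the nonpositive-$b$ portion when $\gamma\leq 1$.

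The main obstacle is the remaining window $b\in[0,1)$ with $\gamma\leq 1$ (and $b\notin\mathcal{E}_p$ if $\gamma=1$), where $N^bCM^2$ diverges and the uniform pointwise comparison breaks down. To close this gap one must exploit the picture behind Theorem~\ref{thm:old}: on the typical event $\mathcal{H}_p(G_N)\approx N^{\alpha^*}$, the Laplace transform $\mathbb{E}_{N,\gamma}[e^{-N^b\mathcal{H}_p}]$ is dominated by configurations achieving small $\mathcal{H}_p$ through hierarchical structures $g^\star$ of long edges at scales much larger than $M$, which should be essentially decoupled from the bounded window $W$. Concretely I would aim to show $\log\Psi(e^{in}) = -N^{b+\alpha^*}(1+o(1))$ uniformly in $e^{in}$, using the $g^\star$ constructions of \cite{mv} as lower bounds while leaving the edges inside $W$ free, and the matching upper bounds of the same reference. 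Making this quantitative near the boundary of the transition window---particularly for $\gamma=1$, where the excluded set $\mathcal{E}_p$ already signals genuinely more delicate behavior---looks like the technical heart of the theorem.
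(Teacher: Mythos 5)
Your reduction to comparing window marginals of $\bP^{b,p}_{N,\gamma}$ and $\bP_{N,\gamma}$ is a valid route, and the pointwise Radon--Nikodym bound correctly handles $b<0$ (note it does not extend to $b=0$: the bound $N^{b}CM^{2}$ is then a nonvanishing constant). The genuine gap is that the entire transition window $b\in[0,1)$ for $\gamma\le 1$---where the theorem has its actual content---is left as a sketch, and the asymptotic you propose, $\log\Psi(e^{in})=-N^{b+\alpha^*}(1+o(1))$ uniformly in $e^{in}$, would not close it even if established. Your argument needs the fluctuation of $\log\Psi$ across configurations $e^{in}$ to be $o(1)$; the proposed asymptotic controls it only to order $o(N^{b+\alpha^*})$, which diverges throughout the transition window since $b+\alpha^*>0$. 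A leading-order free-energy computation thus gives no information about the conditional Radon--Nikodym derivative. The pointwise-comparison strategy is asking for a much sharper estimate than the theorem itself requires.

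The paper avoids pointwise comparison altogether. After truncating edges of length $>L$ (so that the indicators $\mathds{1}\{B^{L}_{(G_N,i)}(k)\simeq(g,o)\}$ become finite-range dependent under $\bP_{N,\gamma}$), Janson's concentration inequality gives an exponential-in-$N$ bound on deviations of the empirical fraction of roots $i$ with $B^{L}_{(G_N,i)}(k)\simeq(g,o)$, and a Chernoff bound gives the same exponential decay for the event of seeing more than $\varepsilon N$ long edges. Separately, the explicit hierarchical graphs $\hat g_{\alpha,N}$ and $\doublehat g_{k,N}$ of \cite{mv} yield a lower bound on $Z^{b,p}_{N,\gamma}$ that is stretched-exponential when $\gamma<1$ or $b<0$; for $\gamma=1$ the bound is exponential with a rate that is then dominated via an independence trick, splitting on $\{\cH_p(G_N)\le N^{\alpha}\}$ and invoking Lemma~\ref{prop:VM}. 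The crude inequality $\bP^{b,p}_{N,\gamma}(E)\le(Z^{b,p}_{N,\gamma})^{-1}\,\bP_{N,\gamma}(E)$, which uses only $e^{-N^{b}\cH_p}\le 1$, then suffices: exponential concentration under the reference measure beats the partition-function penalty. Since local convergence at a uniformly chosen root is precisely the statement that this empirical fraction converges, no control of the marginal at a fixed bulk location is needed.
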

Intuitively, this result states that, if one of the three conditions holds, then graphs sampled from $\bP_{N,\gamma}$ and $\bP_{N,\gamma}^{b,p}$ are indistinguishable from the point of view of local convergence; in other words, the presence of the Gibbs weight $\exp\{-N^b \cdot \mathcal{H}_p(g)\}$ has no impact on the local picture. Note that, for the regimes $\gamma \in (0,1)$ and $\gamma = 1$, this is compatible with the heuristic explanation we have provided above for the proof of Theorem \ref{thm:old}: in both cases, graphs are most likely to achieve a small diameter by deviating from the reference measure in their long-edge configuration. In the remaining case [$\gamma > 1,\;b < 0$], the idea is that the Gibbs weight is not sufficiently large to cause the random graph to deviate from its local aspect under the reference measure.

Taking this into account, it is not surprising that the study of local convergence is harder for $\gamma > 1,\; b \geq 0$: in that case, short edges do most of the job of reducing the diameter of the graph, so the local picture should be affected by the Gibbs weight. If there is a limiting distribution at all, it would likely differ from $\bP_\gamma$. Our results in this direction are more modest: we show that for a certain subset of the relevant parameters, there is \textit{no} convergence in distribution.

\begin{proposition}\label{prop:main}
For $L > 0$, let $\mathcal{L}_L$ be the set of graphs $g = (V,E) \in \mathcal{G}$ with the property that, if $x,y \in V$ with $0<|x-y|\leq L$, then $\{x,y\} \in E$. For any $L > 0$, $\gamma > 1$, $p < \infty$ and $b > p+1$, then
\begin{equation*}
\bP_{N,\gamma}^{b,p}(G_N \in \mathcal{L}_L) \xrightarrow{N\to\infty} 1.
\end{equation*}
In particular (since $\mathcal{G}$ consist only of locally finite graphs), the sequence $(G_N,\mathcal{O}_N)$ sampled from $\mathbb{P}_{N,\gamma}^{b,p} \otimes \mathcal{U}_N$ does not converge in distribution. 
\end{proposition}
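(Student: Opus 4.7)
The plan is to prove by a direct resampling comparison that for each specific short edge $e$ of length $2\leq \ell\leq L$, the probability $\bP_{N,\gamma}^{b,p}(e\notin G_N)$ decays faster than any power of $N$; a union bound over the $O(LN)$ short edges then yields the first assertion. The final non-convergence claim will follow because a hypothetical local limit would have to contain, for every fixed $L$, every edge from the root to a vertex at ambient distance at most $L$, forcing infinite degree at the root.

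Fix $e=\{x_0, x_0+\ell\}$ with $2\leq \ell\leq L$. From the definition \eqref{def:prob}, for every graph $g\not\ni e$ in the support of the reference measure,
\begin{equation*}
\frac{\bP_{N,\gamma}^{b,p}(g)}{\bP_{N,\gamma}^{b,p}(g\cup\{e\})}=\frac{1-p_e}{p_e}\cdot \exp\{-N^{b}\Delta_g\},\qquad \Delta_g:=\mathcal{H}_p(g)-\mathcal{H}_p(g\cup\{e\})\geq 0,
\end{equation*}
where the prefactor is bounded above by $\exp\{L^{\gamma}\}$. Summing over $g\not\ni e$ reduces everything to a uniform-in-$g$ lower bound on $\Delta_g$, since one then obtains $\bP_{N,\gamma}^{b,p}(e\notin G_N)\leq \exp\{L^{\gamma}\}\cdot \max_{g\not\ni e}\exp\{-N^{b}\Delta_g\}$.

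The key step will be the deterministic estimate $\Delta_g\geq c_p N^{-(p+1)}$ for a constant $c_p>0$ depending only on $p$. Its proof is short: since $e\notin g$ and $\ell\geq 2$, $\d_g(x_0,x_0+\ell)\geq 2$, whereas $\d_{g\cup\{e\}}(x_0,x_0+\ell)=1$; because every pairwise distance is non-increasing under edge addition, the single pair $(x_0,x_0+\ell)$ yields $\mathcal{H}_p(g)^{p}-\mathcal{H}_p(g\cup\{e\})^{p}\geq (2^{p}-1)/\binom{N}{2}$. Combining this with the concavity inequality $a^{1/p}-b^{1/p}\geq \tfrac{1}{p}a^{1/p-1}(a-b)$ (valid for $a\geq b\geq 0$, $p\geq 1$) and the crude bound $\mathcal{H}_p(g)\leq \mathcal{H}_\infty(g)\leq N-1$ produces the claimed lower bound on $\Delta_g$. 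The hypothesis $b>p+1$ is then exactly what makes $N^{b}\Delta_g\geq c_p N^{b-p-1}\to\infty$, so that a union bound over the at most $LN$ short edges gives $\bP_{N,\gamma}^{b,p}(G_N\notin\mathcal{L}_L)\leq LN\cdot e^{L^{\gamma}}\cdot \exp\{-c_p N^{b-p-1}\}\to 0$.

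The only place I expect any subtlety is the transfer from the first assertion to the non-convergence statement. Suppose, toward a contradiction, that $(G_N,\mathcal{O}_N)$ converged in distribution to some $(G,\mathcal{O})$. Since $\mathcal{O}_N$ is uniform on $[N]$, we have $\mathcal{O}_N\in[L+1,N-L]$ with probability tending to $1$, and together with $G_N\in\mathcal{L}_L$ this forces $B_{(G_N,\mathcal{O}_N)}(1)$ to contain at least $2L+1$ vertices. Applying the definition of local convergence to the countable collection of rooted graphs whose ball of graph-radius $1$ has at most $2L$ vertices shows that $B_{(G,\mathcal{O})}(1)$ has at least $2L+1$ vertices almost surely; letting $L\to\infty$ contradicts $G\in\mathcal{G}$ being locally finite. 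I do not anticipate any serious obstacle beyond bookkeeping; the deterministic nature of the bound on $\Delta_g$ is what makes the argument insensitive to the detailed structure of the Gibbs measure, at the (mild) cost of the hypothesis $b>p+1$ rather than a potentially sharper threshold.
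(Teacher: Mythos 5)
Your proof is correct and rests on the same core estimate as the paper's: the pointwise bound $\mathcal{H}_p(g)^p - \mathcal{H}_p(g\cup\{e\})^p \geq (2^p-1)/\binom{N}{2}$ combined with the mean-value/concavity step to convert this into a lower bound $\Delta_g \gtrsim N^{-(p+1)}$, which is then beaten by the weight $N^b$ precisely when $b>p+1$. The only difference is bookkeeping: you bound $\bP_{N,\gamma}^{b,p}(e\notin G_N)$ for each fixed short edge and take a union bound over the $O(LN)$ such edges, whereas the paper decomposes $\mathcal{L}_L^c$ by summing the ratios $\bP(G_N=g')/\bP(G_N=g)$ over all $g'$ obtained by deleting $k$ short edges from $g\in\mathcal{L}_L$ and then sums a geometric series in $k$; the two accountings are equivalent in effect. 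You also spell out the passage from $\bP(G_N\in\mathcal{L}_L)\to 1$ to non-convergence (unbounded degree at a uniformly chosen root versus local finiteness of the putative limit), which the paper leaves implicit.
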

\begin{remark} \begin{enumerate}\item As mentioned after the statement of Theorem \ref{thm:old}, the ``transition window'' for $b$ in case $\gamma > 1$ is the interval $(0,\gamma)$ (regardless of $p$). Hence, if $p+1 < \gamma$, the above proposition shows that there is no local limit even for some values of $b$ within the transition window.
\item For $\gamma > 1$, this leaves open the cases:
$$p = \infty,\; b \geq 0\qquad \text{and}\qquad p \in [1,\infty),\; b \in [0,p+1].$$
We have no guess on whether or not  local convergence occurs for some of these parameter values.
\end{enumerate}\end{remark}

\section{Proof of main result}
\subsection{Truncated balls and proof of Theorem \ref{thm:main}}
In order to prove Theorem \ref{thm:main}, it is enough to fix $\gamma,p,b$ as in \eqref{eq:assumption}, fix $k \in \N$, $(g,o) \in \cGd$, $\varepsilon > 0$, and show that
\begin{equation}
\bP_{N,\gamma}^{b,p}\left(\left|\frac{\#\{i\in[N]:B_{(G_N,i)}(k) \simeq (g,o)\}}{N}  - \mu_\gamma(k,(g,o))\right| > \varepsilon\right) \xrightarrow{N \to \infty} 0,\label{eq:suff}
\end{equation}
where
\begin{equation}\label{eq:def_mu}\mu_\gamma(k,(g,o)) := \bP_\gamma\left(B_{(G,0)}(k) \simeq (g,o)\right).\end{equation}
The natural approach to prove this statement is to first show that, under the reference measure $\bP_{N,\gamma}$, the graph $G_N$ has certain desirable properties with high probability, and then to use this to  draw the desired conclusion about the weighted measure $\bP_{N,\gamma}^{b,p}$. This approach is indeed natural because of the independence properties of the reference measure, which make it easier to study than the weighted measure. However, note that for $i, j \in [N]$, events of the form $\{B_{(G_n,i)}(k) \simeq (g,o)\}$ and $\{B_{(G_n,j)}(k) \simeq (g,o)\}$ are not independent even if $|i-j|$ is large, as both events could be influenced by the presence of long edges with extremities in the vicinities of $i$ and $j$. To deal with this problem, we will introduce \textit{truncated balls} below.

Given an edge $e = \{i,j\}$ of a graph on $\Z$, we define the \textit{length} of $e$ as $|e|:=|i-j|$. Given the rooted graph $(g,o)$ and $k,L \in \N$, we define the truncated ball $B^L_{(g,o)}(k)$ as follows. Let $g'$ be the graph obtained from $g$ by removing all edges with length larger than $L$; then, we let $B^L_{(g,o)}(k) = B_{(g',o)}(k)$.

The essential ingredients in our proof of \eqref{eq:suff} are given in the following result.
\begin{proposition}\label{prop:ingredients}
Fix $\gamma, p, b$ as in \eqref{eq:assumption}.\begin{enumerate} \item For any $k, L > 0$, $(g,o) \in \cGd$ and $\varepsilon > 0$,
\begin{equation}
\mathbb{P}_{N,\gamma}^{b,p}\left( \left|\frac{\#\{i \in [N] : B^L_{(G_N,i)}(k)\simeq (g,o)\}}{N} - \mu_{\gamma}^L(k,(g,o)) \right|> \varepsilon \right) \xrightarrow{N \to \infty} 0,
\label{eq:ing1}\end{equation}
where \begin{equation}\label{eq:def_mu_trunc}\mu_\gamma^L(k,(g,o))=\bP_{\gamma}\left(B^L_{(G,0)}(k) \simeq (g,o) \right). \end{equation}
\item For any $\varepsilon > 0$ there exists $L > 0$ such that
\begin{equation}\label{eq:ing2}
\bP_{N,\gamma}^{b,p}\left( \text{$G_N$ has more than $\varepsilon N$ edges with length larger than $L$}\right) \xrightarrow{N \to \infty} 0.
\end{equation}
\end{enumerate}
\end{proposition}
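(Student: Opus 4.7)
The plan is to use the independence of edges under the reference measure $\bP_{N,\gamma}$ to obtain exponential concentration for each bad event, and then transfer the bounds to $\bP_{N,\gamma}^{b,p}$ through the explicit Radon--Nikodym derivative and the diameter window supplied by Theorem \ref{thm:old}. For Part 1, the key observation is that $B^L_{(G_N,i)}(k)$ is measurable with respect to the edges of length $\le L$ whose endpoints both lie in $[i-kL,i+kL]$, so under $\bP_{N,\gamma}$ the indicators $X_i:=\mathbbm{1}\{B^L_{(G_N,i)}(k)\simeq(g,o)\}$ form a $(2kL)$-dependent family with mean $\mu_\gamma^L(k,(g,o))$ away from the boundary. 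Partitioning $[N]$ into blocks of length $2kL+1$ and applying a Chernoff--Hoeffding bound to representatives yields $\bP_{N,\gamma}(|N^{-1}\sum_i X_i-\mu_\gamma^L|>\varepsilon)\le e^{-c(\varepsilon,k,L)N}$ for some $c>0$. For Part 2, the number of edges of length $>L$ is a genuine sum of independent Bernoullis under $\bP_{N,\gamma}$ with mean $\le N\sum_{\ell>L}e^{-\ell^\gamma}$; this is $<\varepsilon N/2$ once $L$ is large, and Hoeffding again gives exponential concentration.

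To transfer either bound to $\bP_{N,\gamma}^{b,p}$, I would write
$$\bP_{N,\gamma}^{b,p}(A)=\frac{\mathbb{E}_{N,\gamma}[\,e^{-N^b\mathcal{H}_p(G_N)}\mathbbm{1}_A\,]}{Z_{N,\gamma}^{b,p}}$$
and localise on the window $\{\mathcal{H}_p(G_N)\in I_\eta\}$ with $I_\eta=[N^{\alpha^*-\eta},N^{\alpha^*+\eta}]$, whose complement has $\bP_{N,\gamma}^{b,p}$-probability $o(1)$ by Theorem \ref{thm:old}. On $I_\eta$ the Gibbs factor is pinched between $e^{-N^{b+\alpha^*\pm\eta}}$, and restricting the integral defining $Z_{N,\gamma}^{b,p}$ to the same window gives $Z_{N,\gamma}^{b,p}\ge e^{-N^{b+\alpha^*+\eta}}\bP_{N,\gamma}(\mathcal{H}_p\in I_\eta)$. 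These estimates combine to
$$\bP_{N,\gamma}^{b,p}(A\cap\{\mathcal{H}_p\in I_\eta\})\;\le\; e^{\,N^{b+\alpha^*+\eta}-N^{b+\alpha^*-\eta}}\cdot\frac{\bP_{N,\gamma}(A)}{\bP_{N,\gamma}(\mathcal{H}_p\in I_\eta)}.$$
A direct case check confirms that $b+\alpha^*<1$ strictly in each branch of \eqref{eq:assumption}, so for small $\eta$ the prefactor is $e^{o(N)}$; for the denominator one uses the hierarchical witnesses $g^\star$ of \cite{mv}, yielding $\bP_{N,\gamma}(\mathcal{H}_p\le N^{\alpha^*+\eta})\ge\exp(-\sum_{e\in g^\star}|e|^\gamma)$.

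I expect the main obstacle to live in this last lower bound in the case $\gamma=1$, where the edge-cost of $g^\star$ is $\Theta(N)$ rather than $o(N)$, so the denominator is only $e^{-\Theta(N)}$ and need not be dominated by the Hoeffding rate $e^{-c(\varepsilon,k,L)N}$ (which gets worse as $L$ grows). To close this gap I would refine the transfer by further conditioning on the long-edge configuration $T$: under $\bP_{N,\gamma}$, the short edges $S$ on which the Part 1 event $A$ depends are independent of $T$, and on the hierarchy-typical realisations of $T$ one expects $\mathcal{H}_p(S\cup T)$ to vary with $S$ by at most $o(N^{\alpha^*+\eta})$, so that $A$ and $\{\mathcal{H}_p\in I_\eta\}$ become conditionally near-independent under the reference, and the naive bound on $\bP_{N,\gamma}(A\mid\mathcal{H}_p\in I_\eta)$ can be replaced by $\bP_{N,\gamma}(A)$ up to a multiplicative $e^{o(N)}$ factor. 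Making this insensitivity of $\mathcal{H}_p$ to short edges in the $\gamma=1$ hierarchical regime quantitative is the delicate step; it is precisely what the heuristic recalled in the introduction --- that for $\gamma\le 1$ the Gibbs weight acts essentially on the long-edge skeleton alone --- is meant to express.
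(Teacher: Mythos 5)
Your plan for the reference-measure estimates matches the paper: the truncated-ball indicators are finite-range dependent, so Janson-type concentration (the paper's Lemma~\ref{lem:janson}, giving Lemma~\ref{lem:conv}) yields a rate $e^{-c(\varepsilon,k,L)N}$ for Part~1, and Chernoff on the independent Bernoullis gives the exponential bound of Lemma~\ref{eq:eloslongos} for Part~2. Your transfer to $\bP_{N,\gamma}^{b,p}$ via lower bounds on $Z_{N,\gamma}^{b,p}$ from the hierarchical witnesses, together with the observation that $b+\alpha^*<1$ on the whole range \eqref{eq:assumption}, is essentially Corollary~\ref{cor}, and it closes the argument for $\gamma\in(0,1)$, for $\gamma>1,\,b<0$, and for Part~2 in all cases (because the rate $L^{\gamma/8}$ from Lemma~\ref{eq:eloslongos} can be pushed above any fixed threshold by choosing $L$ large). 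You also correctly identified where this transfer breaks down.

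The genuine gap is exactly where you flagged it: Part~1 with $\gamma=1$, where $Z^{b,p}_{N,1}\ge e^{-kN+o(N)}$ while the rate $c(\varepsilon,k,L)$ from Lemma~\ref{lem:conv} degrades as $L$ grows and need not exceed $k$. Your proposed remedy — proving that $\mathcal{H}_p(S\cup T)$ is nearly insensitive to the short-edge configuration $S$ on typical long-edge skeletons $T$, so that $A$ and $\{\mathcal{H}_p\in I_\eta\}$ become conditionally near-independent — is a plausible-sounding heuristic but it is not what the paper does, and it would require controlling $\mathcal{H}_p$ as a function of $S$ uniformly over the relevant $T$'s, which is a substantial and unproved claim. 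The paper instead sidesteps any quantitative insensitivity statement by using a \emph{deterministic inclusion}: Lemma~\ref{prop:VM} says that any graph with $\mathcal{H}_p\le N^\alpha$ (for $\alpha<1/k$) must satisfy $\sum_{|e|\ge N^\delta}|e|\ge kN-o(N)$, an event $C$ that depends only on long edges. Since your Part~1 event $E_N$ depends only on edges of length $\le L$, $E_N$ and $C$ are genuinely independent under $\bP_{N,1}$ once $N^\delta>L$, so $\bP_{N,1}(E_N\cap\{\mathcal{H}_p\le N^\alpha\})\le\bP_{N,1}(E_N)\bP_{N,1}(C)$, and a Chernoff estimate gives $\bP_{N,1}(C)\le e^{-kN+o(N)}$, which exactly cancels the $e^{kN+o(N)}$ coming from $Z^{-1}$. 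The complementary piece $E_N\cap\{\mathcal{H}_p>N^\alpha\}$ is handled directly by Theorem~\ref{thm:old}. So where you reach for conditional near-independence of $\mathcal{H}_p$ given $T$, the paper uses only (i) a deterministic necessary condition on $T$ imposed by small $\mathcal{H}_p$, and (ii) exact independence of that condition from the short-edge event. You should replace the insensitivity step with this factorisation; without Lemma~\ref{prop:VM} or an equivalent deterministic statement, the $\gamma=1$ branch of Part~1 remains open.
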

Let us show how Proposition \ref{prop:ingredients} gives the proof of Theorem \ref{thm:main}; the proof of Proposition \ref{prop:ingredients} will be given afterwards.
\begin{proof}[Proof of Theorem \ref{thm:main}.] Fix $\gamma,p, b$ as in \eqref{eq:assumption}. Also fix $k \in \N$, $(g,o) \in \cGd$ and $\varepsilon > 0$. As already observed, the statement of the theorem will follow once we establish \eqref{eq:suff}. 

Let $\mu_\gamma(k,(g,o))$ be as in \eqref{eq:def_mu} and, for each $L > 0$, let $\mu^L_\gamma(k,(g,o))$ be as in \eqref{eq:def_mu_trunc}. Using the fact that $\bP_\gamma$ is supported on locally finite graphs, it is easy to verify that $\lim_{L \to \infty} \mu^L_\gamma(k,(g,o)) = \mu_\gamma(k,(g,o))$. We can thus choose $L$ large enough that
$$|\mu^L_\gamma(k,(g,o)) - \mu_\gamma(k,(g,o))| < \varepsilon /2.$$
It is then sufficient to prove that
\begin{equation}\label{eq:suff1}
\bP_{N,\gamma}^{b,p}\left(\left|\frac{\#\{i : B_{(G_N,i)}^L(k) \simeq (g,o)\}}{N}  - \mu_\gamma^L(k,(g,o))\right| > \frac{\varepsilon}{4} \right) \xrightarrow{N \to \infty} 0
\end{equation}
and 
\begin{equation}\label{eq:suff2}
\bP_{N,\gamma}^{b,p}\left(\left|\frac{\#\{i : B_{(G_N,i)}(k) \simeq (g,o)\}}{N} - \frac{\#\{i : B^L_{(G_N,i)}(k) \simeq (g,o)\}}{N} \right| > \frac{\varepsilon}{4} \right) \xrightarrow{N \to \infty} 0.
\end{equation}
The convergence \eqref{eq:suff1} is given directly by \eqref{eq:ing1}. For \eqref{eq:suff2}, first observe that, if $L$ is large,
$$\{i \in [N]: B_{(G_N,i)}(k) \simeq (g,o)\} \subseteq \{i \in [N]: B_{(G_N,i)}^L(k) \simeq (g,o)\}.$$
Moreover, if $i_0 \in [N]$ belongs to the set on the right-hand side but not to the set on the left-hand side, then there exists a vertex $x$ of $g$ such that $i_0+x-o$ is an extremity of some edge $e$ of $G_N$ with $|e| > L$. Using these observations, we obtain
\begin{align*}
&|\#\{i\in [N]: B_{(G_N,i)}(k) \simeq (g,o)\} - \#\{i\in [N]: B^L_{(G_N,i)}(k) \simeq (g,o)\}|\\
&\hspace{3cm} \leq 2\cdot \#\{\text{vertices of $g$}\} \cdot \# \{ \text{edges of $G_N$ with length larger than $L$}\}.
\end{align*}
Hence, \eqref{eq:suff2} follows from \eqref{eq:ing2}, completing the proof.

\end{proof}

\subsection{Estimates from \cite{mv}}
We now import some estimates that we will need from \cite{mv} (Lemmas \ref{lem:VM} and \ref{prop:VM} below) and state and prove a consequence of them (Corollary \ref{cor}). 
\begin{lemma}[\cite{mv}]\label{lem:VM}
\begin{enumerate}\item
Assume $\gamma \in (0,1)$. For each $\alpha \in (0,1)$ and $N$ large enough there exists a graph $\hat{g}_{\alpha,N}$ such that
\begin{align*}&\mathcal{H}_p(\hat{g}_{\alpha,N}) \leq N^\alpha\text{ for all }p \in [1,\infty] \;\text{ and } \\&\hspace{1.5cm}\bP_{N,\gamma}(\hat{g}_{\alpha,N} \text{ is a subgraph of } G_N) \geq \exp\left\{-cN^{1-\alpha(1-\gamma)}\right\},\end{align*}
where $c > 0$ depends on $\gamma, \alpha$ but not on $N$.
\item Assume $\gamma = 1$. For each $k \in \N$ and $N$ large enough there exists a graph $\doublehat{g}_{k,N}$ such that
\begin{align*}&\mathcal{H}_p(\doublehat{g}_{k,N}) \leq 3(k+1)N^{\frac{1}{k+1}}\text{ for all } p \in [1,\infty] \; \text{ and }\\[.15cm]&\hspace{1.5cm} \bP_{N,1}\left(\doublehat{g}_{k,N} \text{ is a subgraph of } G_N \right) \geq \exp\left\{-kN\right\}.\end{align*}
\end{enumerate}
\end{lemma}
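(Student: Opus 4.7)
The plan is to construct the graphs $\hat{g}_{\alpha,N}$ and $\doublehat{g}_{k,N}$ as the union of the deterministic base layer $\mathscr{E}_{N,1}$ with a carefully chosen collection of ``long'' layers $\mathscr{E}_{N,\ell}$ along a geometric progression of scales. Since the base layer is almost surely contained in $G_N$ under $\bP_{N,\gamma}$, the probability estimate reduces to a product over the long layers, which factorizes thanks to the edge-independence of the reference measure.

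For part (1) with $\gamma\in(0,1)$, I take
\[ \hat{g}_{\alpha,N} = \biggl([N],\;\mathscr{E}_{N,1}\cup\bigcup_{j=0}^{i}\mathscr{E}_{N,\lfloor 2^j N^\alpha\rfloor}\biggr), \]
where $i$ is the smallest integer with $2^i N^\alpha\geq N$. A greedy routing argument bounds the diameter: route from $x$ to vertex $1$ by repeatedly using the largest long-scale edge that fits (at most $O(1)$ hops per scale, since the ratio between consecutive scales is $2$) and cover the residual distance of at most $N^\alpha$ with base edges; then route analogously from $1$ to $y$. This gives $\mathcal{H}_\infty(\hat{g}_{\alpha,N})\leq N^\alpha$ for $N$ large, hence $\mathcal{H}_p\leq N^\alpha$ for every $p$. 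For the probability, the edges of $\mathscr{E}_{N,\ell_j}$ form a path from $1$ to $N$ of total length $N-1$ with each edge of length at most $\ell_j$, so $\sum_{e\in\mathscr{E}_{N,\ell_j}}|e|^\gamma\leq (N-1)\ell_j^{\gamma-1}$; by independence,
\[ \bP_{N,\gamma}(\mathscr{E}_{N,\ell_j}\subseteq G_N)\geq \exp\bigl(-(N-1)\cdot 2^{j(\gamma-1)} N^{\alpha(\gamma-1)}\bigr). \]
Since $\gamma-1<0$, the prefactors $2^{j(\gamma-1)}$ sum to a finite constant depending only on $\gamma$, and multiplying across $j=0,\ldots,i$ yields the required $\bP_{N,\gamma}(\hat{g}_{\alpha,N}\subseteq G_N)\geq\exp(-c\,N^{1-\alpha(1-\gamma)})$.

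For part (2) with $\gamma=1$, I take
\[ \doublehat{g}_{k,N} = \biggl([N],\;\mathscr{E}_{N,1}\cup\bigcup_{j=1}^{k}\mathscr{E}_{N,\lfloor N^{j/(k+1)}\rfloor}\biggr). \]
A key simplification is that the edges of any $\mathscr{E}_{N,\ell}$ form a path from $1$ to $N$, so $\sum_{e\in\mathscr{E}_{N,\ell}}|e|=N-1$ exactly, giving the clean identity $\bP_{N,1}(\mathscr{E}_{N,\ell}\subseteq G_N)=\exp(-(N-1))$. Independence across the $k$ long layers then gives $\bP_{N,1}(\doublehat{g}_{k,N}\subseteq G_N)\geq\exp(-k(N-1))\geq\exp(-kN)$. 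For the diameter, consecutive scales have ratio $\approx N^{1/(k+1)}$, so a greedy routing strategy at each of the $k+1$ levels (base, $\ell_1,\ldots,\ell_k$) uses at most $\approx N^{1/(k+1)}$ hops; the factor of $3$ in the lemma comfortably absorbs the round trip through vertex $1$ and the base-layer corrections needed to land on the skeleton of each scale.

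The main technical hurdle is the diameter analysis: arbitrary vertices $x\in[N]$ need not lie on the skeletons $\{1,1+\ell,1+2\ell,\ldots\}$ of individual long layers, and these skeletons are not nested across scales (e.g.\ $\lfloor N^{1/(k+1)}\rfloor$ does not generally divide $\lfloor N^{2/(k+1)}\rfloor$). The cleanest way to cope is to route through vertex $1$, which lies on every skeleton by construction; this introduces a factor-of-$2$ overhead in the diameter that is absorbed by the constants in both statements. Minor bookkeeping then ensures that floor-function errors in the scale definitions and the short final edge of each layer only shift the relevant exponents by additive $o(1)$ contributions, which do not affect the final bounds.
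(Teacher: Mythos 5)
Your overall strategy---take a union of layers $\mathscr{E}_{N,\ell}$ at geometrically spaced scales, estimate the Gibbs cost via $\sum_{e}|e|^\gamma$ layer by layer, and bound the diameter by a hierarchical routing scheme---is the right one, and the cost computation comes out to the correct order in both parts. However, there are two genuine problems, one of which you half-notice but do not actually resolve.

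The serious gap is the diameter analysis with non-nested scales. With $\ell_j=\lfloor 2^jN^\alpha\rfloor$ (part 1) or $\ell_j=\lfloor N^{j/(k+1)}\rfloor$ (part 2), the skeleton of $\mathscr{E}_{N,\ell_{j+1}}$ is not contained in that of $\mathscr{E}_{N,\ell_j}$. This is fatal for the greedy routing: once you are at a point $P=1+m\ell_j$ on the $\ell_j$-skeleton, you cannot take an $\ell_{j+1}$-hop unless $\ell_{j+1}\mid m\ell_j$, which for generic $m$ and non-nested scales fails; and walking along the $\ell_j$-skeleton until this divisibility holds can take on the order of $\ell_{j+1}/\gcd(\ell_j,\ell_{j+1})$ hops, which can be as large as $\ell_{j+1}$. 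Routing through vertex $1$ does not circumvent this, because the obstacle is not reaching $1$ at the top of the hierarchy but climbing the hierarchy at all from a generic point: you still have no way to take the intermediate long hops. The standard fix is to force exact nesting, e.g.\ $\ell_j=2^j\ell_0$ with $\ell_0=\lfloor N^\alpha/2\rfloor$ in part 1, and $\ell_j=a^j$ with $a=\lfloor N^{1/(k+1)}\rfloor$ in part 2. With nested scales the routing is clean (binary expansion of $m$ gives at most one hop per scale in part 1; in part 2 one walks to the nearest $\ell_{j+1}$-skeleton point in at most $a$ hops of size $\ell_j$, giving diameter at most $(k+1)a+O(k)\le 3(k+1)N^{1/(k+1)}$), and the cost bounds are unchanged up to a harmless constant. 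Relatedly, even with nesting your stated routing budget ($x\to 1$ then $1\to y$, each with up to $N^\alpha$ base edges of slack) exceeds the required bound $\mathcal{H}_p\le N^\alpha$ by roughly a factor of two plus a $\log N$ term; taking $\ell_0\approx N^\alpha/2$ absorbs this, but it needs to be said.

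A minor point: the inequality $\sum_{e\in\mathscr{E}_{N,\ell}}|e|^\gamma\le(N-1)\ell^{\gamma-1}$ does not follow from ``total length $N-1$, each edge of length at most $\ell$''; for $\gamma<1$ this reasoning gives a \emph{lower} bound (since $|e|^{\gamma-1}\ge\ell^{\gamma-1}$ when $|e|\le\ell$). The correct justification is the edge count: $\mathscr{E}_{N,\ell}$ has $\lceil(N-1)/\ell\rceil$ edges each of length at most $\ell$, hence $\sum|e|^\gamma\le\lceil(N-1)/\ell\rceil\ell^\gamma\le(N-1)\ell^{\gamma-1}+\ell^\gamma$, and the error term $\ell^\gamma$ is negligible. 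The same comment applies, harmlessly, to the $\gamma=1$ identity, where one must also remember that edges of length $1$ are deterministic and do not contribute to the exponent.
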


\begin{lemma}[\cite{mv}]\label{prop:VM}
Assume $p \in [1,\infty]$, $k \in \N$ and $\alpha < \frac{1}{k}$. There exists $\delta > 0$ and a function $o_1(N)$ with $o_1(N)/N \xrightarrow{N \to \infty} 0$ such that, for $N$ large enough,
$$\text{if } g = ([N],E) \in \mathcal{G}\text{ with } \mathcal{H}_p(g) \leq N^\alpha,\; \text{ then } \sum_{\substack{e \in E:\\|e| \geq N^\delta}} |e| \geq kN - o_1(N). $$
\end{lemma}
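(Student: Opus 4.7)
My plan is to first reduce the general $p \in [1,\infty]$ case to the diameter case $p = \infty$, and then prove the reduced statement by induction on $k$. For the reduction, the hypothesis $\mathcal{H}_p(g) \leq N^\alpha$ yields $\sum_{i<j} \d_g(i,j)^p \leq \binom{N}{2} N^{\alpha p}$, so by Markov's inequality only a $C^{-p}$ fraction of pairs have $\d_g(i,j) > C N^\alpha$ for any $C > 1$. Passing to a subset $V_0 \subset [N]$ of near-full density on which pairwise distances are $O(N^\alpha)$ effectively reduces the problem to the diameter case, with the constant $C$ absorbed into a negligible shift of $\alpha$.

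For $p = \infty$, I would argue by induction on $k$. The base case $k = 1$ (where $\alpha < 1$) is direct: a shortest path from vertex $1$ to vertex $N$ has at most $N^\alpha$ edges whose lengths sum to at least $N - 1$. Edges of length below $N^\delta$ contribute at most $N^{\alpha + \delta}$; choosing any $\delta \in (0, 1 - \alpha)$, the sum of lengths of edges of length $\geq N^\delta$ in this single path is already $\geq N - o(N)$.

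For the inductive step $k-1 \Rightarrow k$, assume $\alpha < 1/k$ and partition $[N]$ into $M = \lfloor N/L \rfloor$ blocks of size $L = N^{1/k - \eta}$ for a small $\eta > 0$. Form the quotient graph $\tilde g$ on $M$ vertices (two blocks being adjacent iff $g$ has an edge between them). Then $\mathrm{diam}(\tilde g) \leq \mathrm{diam}(g) \leq N^\alpha$, and the effective exponent is $\alpha \log N / \log M = \alpha / (1 - 1/k + \eta)$, which is strictly less than $1/(k-1)$ for small $\eta$. Applying the induction hypothesis to $\tilde g$ produces a sum of long-edge lengths in $\tilde g$ of $\geq (k-1)M - o(M)$; lifting back to $g$ (since each $\tilde g$-edge of length $\ell$ corresponds to a $g$-edge of length $\geq (\ell-1)L$) yields a contribution of $\geq (k-1)N - o(N)$ from $g$-edges of length $\geq L$. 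The remaining $N - o(N)$ should come from a finer-scale contribution, for instance a shortest-path argument within each block producing $\geq L - o(L)$ from edges of length in the range $(N^\delta, L)$, summed over $M$ blocks.

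The main obstacle is cleanly separating the two scales and avoiding double-counting: the ``inter-block'' edges detected by the induction hypothesis (of length $\geq L$) must be disjoint from the ``intra-block'' edges supplying the additional $N - o(N)$ (of length $< L$). This forces $N^\delta$ to be strictly below $L$, so $\delta < 1/k$, and it requires showing that each block inherits enough length in the range $(N^\delta, L)$. A further subtlety is that ``intra-block'' shortest paths in $g$ may exit their block, so one must allocate contributions unambiguously, perhaps by considering paths between consecutive block-endpoints in $g$ and arguing that edges used in different such paths are either disjoint or not double-counted beyond an $o(N)$ discrepancy. Finally, one must verify that the $o_1(N)$ error accumulating through the $k$ induction steps remains $o(N)$, which constrains the choice of $\delta$ and $\eta$ depending on $k$.
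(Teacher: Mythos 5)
The lemma you are proving is attributed to \cite{mv} and the present paper does not reproduce a proof of it, so there is no in-paper argument to compare against; I can only assess the internal correctness of your proposal.

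Your high-level strategy (reduce to $p=\infty$, then run a multi-scale/renormalization induction on $k$) is the natural one, and the base case $k=1$ is correct. However, the proposal has two real gaps, the second of which you partly flag yourself but do not close.

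First, the reduction to $p=\infty$ needs more than Markov on pairs. A small fraction of ``bad'' pairs does not yield a near-full subset with uniformly small pairwise distances; you should instead average over the first coordinate to find a single hub vertex $v^*$ with $\d_g(v^*,u)\le CN^\alpha$ for all but $C^{-p}N$ vertices $u$, and then take $V_0$ to be that near-neighborhood of $v^*$ (pairwise distances $\le 2CN^\alpha$ by the triangle inequality). Even then, $V_0$ need not be an interval of $[N]$, so the block decomposition of $[N]$ used in your $p=\infty$ argument does not transfer verbatim to $V_0$; one must check that enough blocks contain vertices of $V_0$ near both block endpoints, which requires $C=C(N)\to\infty$ slowly and an extra layer of bookkeeping.

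Second, and more seriously, the inductive step is not closed. The quotient-graph part does give a lower bound of $(k-1)N-o(N)$ coming from $g$-edges of length $\ge L$ (one lifted representative per $\tilde g$-edge). The problem is the claimed additional $N-o(N)$ from the ``intra-block'' scale. The path $P_j$ from $(j-1)L+1$ to $jL$ has length at most $N^\alpha$ and signed displacement $L-1$, but nothing prevents nearly all of that displacement from being carried by edges of length $\ge L$ that leave the block; in that case $P_j$ contributes essentially nothing in the range $(N^\delta,L)$, and the long edges it does use may coincide with the representatives already accounted for in the quotient step, so their lengths cannot simply be added. Your proposed remedy --- that double-counting across different blocks' paths be ``not beyond an $o(N)$ discrepancy'' --- is not justified: a single long edge can in principle be used by the chosen paths of many blocks, and there is no a priori bound on how many. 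Without either (i) a way to choose paths $P_j$ that provably stay at scale $<L$ except for an $o(M)$ set of exceptional blocks, or (ii) a different decomposition that makes the scales genuinely disjoint (e.g.\ arguing scale by scale via crossing numbers $\#\{e\in E_x:|e|\ge s\}$ rather than via a quotient graph), the induction as written does not establish the bound $kN-o(N)$.
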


\begin{corollary}\label{cor}
\begin{enumerate} \item Assume $p \in [1,\infty]$ and either of the following conditions hold:
$$[\gamma \in (0,1),\;b \in (-\infty,1)] \quad \text{or}\quad [\gamma \geq 1,\; b < 0].$$
If $E_N$ are events with $\mathbb{P}_{N,\gamma}(E_N)<\exp\{-\beta N\}$ for some $\beta>0$ and $N$ large, then
\begin{equation}\label{cor:1}
\mathbb{P}^{b,p}_{N,\gamma}(E_N)\xrightarrow{N\to\infty} 0.
\end{equation}
\item Assume $\gamma=1$, $p \in [1,\infty]$ and $b\in [0,1)\backslash \mathcal{E}_p$. Then,
\begin{itemize}
\item[2a.] there exists $C>0$ such that, if $E_N$ are events with $\mathbb{P}_{N,\gamma}(E_N)<\exp\{-CN\}$ for all $N$, then
\begin{equation}\label{cor:2}
\mathbb{P}^{b,p}_{N,1}(E_N)\xrightarrow{N\to\infty} 0;
\end{equation}
\item[2b.] if $E_N$ are events such that $\mathbb{P}_{N,\gamma}(E_N)< \exp\{-cN\}$ for some $c>0$, and each $E_N$ only depends on $\{e:|e|\le L\}$ for a fixed $L$, then
\begin{equation}\label{cor:3}
\mathbb{P}^{b,p}_{N,1}(E_N)\xrightarrow{N\to\infty} 0.
\end{equation}
\end{itemize}
\end{enumerate}
\end{corollary}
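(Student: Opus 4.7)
The unifying starting point is the crude bound
\[
\bP^{b,p}_{N,\gamma}(E_N) = \frac{\mathbb{E}_{N,\gamma}[\mathbf{1}_{E_N}e^{-N^b\mathcal{H}_p(G_N)}]}{Z^{b,p}_{N,\gamma}} \leq \frac{\bP_{N,\gamma}(E_N)}{Z^{b,p}_{N,\gamma}},
\]
valid because $e^{-N^b\mathcal{H}_p}\leq 1$; the task in each part is then to lower-bound the partition function $Z^{b,p}_{N,\gamma}$.

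For part 1, I aim for $Z^{b,p}_{N,\gamma}\geq e^{-o(N)}$, so that the ratio is at most $\exp(-\beta N + o(N))\to 0$ and (\ref{cor:1}) follows. In the subregion $\gamma\in(0,1)$, $b<1$, fix $\alpha\in(0,1\wedge(1-b))$ and apply Lemma \ref{lem:VM}(1): on $\{\hat g_{\alpha,N}\subseteq G_N\}$ one has $\mathcal{H}_p(G_N)\leq N^\alpha$, yielding $Z\geq\exp\{-cN^{1-\alpha(1-\gamma)}-N^{b+\alpha}\}$ with both exponents $o(N)$ by the choice of $\alpha$. In the subregion $\gamma\geq 1$, $b<0$, the deterministic bound $\mathcal{H}_p(G_N)\leq N$ alone gives $Z\geq e^{-N^{b+1}}=e^{-o(N)}$. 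For part 2a ($\gamma = 1$, $b\in[0,1)$), Lemma \ref{lem:VM}(2) applied with $k = \lceil 1/(1-b)\rceil$---which ensures $b+1/(k+1)<1$---gives $Z\geq e^{-3(k+1)N^{b+1/(k+1)}-kN}\geq e^{-(k+1)N}$ for $N$ large, so (\ref{cor:2}) holds with any $C>k+1$.

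Part 2b is the subtlest, since $c>0$ may be arbitrarily small and the preceding bound on $Z$ alone does not suffice; here one must exploit both the locality of $E_N$ and the hypothesis $b\notin\mathcal{E}_p$. Decompose $G_N = S\cup T$ with $S$ the short edges ($|e|\leq L$) and $T$ the long ones, so $S$ and $T$ are $\bP_{N,1}$-independent and $E_N$ depends on $S$ only. The plan has two steps. \emph{Step 1:} Define $\mathcal{T}^* := \{t : \mathcal{H}_p(\mathrm{NN}\cup t)\leq CN^{\alpha^*+\varepsilon}\}$ with $\alpha^* := \alpha^*(1,b)$ and $\varepsilon > 0$ small. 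Replacing each short edge of length $\ell\leq L$ by $\ell$ consecutive nearest-neighbour edges converts a $k$-step path in $\mathrm{NN}\cup s\cup t$ into a path of at most $Lk$ steps in $\mathrm{NN}\cup t$, giving the uniform inequality $\mathcal{H}_p(\mathrm{NN}\cup s\cup t)\geq \mathcal{H}_p(\mathrm{NN}\cup t)/L$; consequently $\{T\notin\mathcal{T}^*\}\subseteq\{\mathcal{H}_p(G_N)\geq CN^{\alpha^*+\varepsilon}/L\}$, whose $\bP^{b,p}_{N,1}$-probability vanishes by Theorem \ref{thm:old} (applicable precisely because $b\notin\mathcal{E}_p$). \emph{Step 2:} By the same monotonicity, $\mathcal{H}_p(\mathrm{NN}\cup s\cup t)\leq CN^{\alpha^*+\varepsilon}$ uniformly in $s$ for $t\in\mathcal{T}^*$, so $e^{-N^b\mathcal{H}_p(s\cup t)}\in[e^{-o(N)},1]$ uniformly---using $b+\alpha^*<1$, which is strict for any $b\in[0,1)$. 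Writing $\phi(t) := \sum_s\bP_{N,1}(s)e^{-N^b\mathcal{H}_p(s\cup t)}$ and $\phi_{E_N}(t)$ for its restriction to $s\in E_N$, this trapping gives $\phi_{E_N}(t)\leq\bP_{N,1}(E_N)$ and $\phi(t)\geq e^{-o(N)}$ for $t\in\mathcal{T}^*$; summing against $\bP_{N,1}(t)$ and using $\sum_t\bP_{N,1}(t)\phi(t)=Z^{b,p}_{N,1}$ yields
\[
\bP^{b,p}_{N,1}(E_N,\, T\in\mathcal{T}^*) \leq e^{o(N)}\bP_{N,1}(E_N) \leq e^{o(N)-cN},
\]
and combining with Step 1 gives (\ref{cor:3}). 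The main obstacle is Step 1, where the global concentration of $\mathcal{H}_p(G_N)$ provided by Theorem \ref{thm:old} must be promoted into a structural statement about the long-edge part $T$ alone; the factor-$L$ geometric lower bound is exactly the tool for this conversion.
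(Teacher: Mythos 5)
Your proof is correct. Parts 1 and 2a follow the paper's route almost verbatim (in 2a you take $k=\lceil 1/(1-b)\rceil$ rather than the $k$ with $\frac{k-1}{k}<b<\frac{k}{k+1}$, giving a slightly larger but equally valid constant $C$). Part 2b, however, is a genuinely different and somewhat leaner argument than the one in the paper, and it is worth spelling out the contrast.

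The paper's proof of 2b introduces the event $B=\{\mathcal{H}_p(G_N)\leq N^\alpha\}$ with $\alpha\in(\frac{1}{k+1},\frac1k)$, invokes the structural Lemma~\ref{prop:VM} to conclude $B\subseteq C$ where $C$ is the event that the total length of edges longer than $N^\delta$ exceeds $kN-o_1(N)$, and then uses independence of $E_N$ and $C$ together with a Chernoff estimate $\bP_{N,1}(C)\le e^{-kN+o(N)}$ and the lower bound $Z^{b,p}_{N,1}\ge e^{-kN+o(N)}$ from \eqref{eq:partition_f}; the two exponential factors of order $e^{\pm kN}$ cancel and the remaining $e^{-cN}$ kills the probability. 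Your proof replaces all of this machinery with two elementary observations. First, the factor-$L$ inequality $\mathcal{H}_p(\mathrm{NN}\cup s\cup t)\ge \mathcal{H}_p(\mathrm{NN}\cup t)/L$, which is purely combinatorial and translates the concentration of $\mathcal{H}_p(G_N)$ around $N^{\alpha^*}$ (Theorem~\ref{thm:old}) into the statement that the long-edge configuration alone lies in $\mathcal{T}^*$ with high $\bP^{b,p}_{N,1}$-probability. Second, the trapping/conditional-partition-function bound: writing $\phi(t)=\sum_s\bP_{N,1}(s)e^{-N^b\mathcal{H}_p(s\cup t)}$, for $t\in\mathcal{T}^*$ one has the uniform bracket $e^{-o(N)}\le\phi(t)\le 1$ (using $b+\alpha^*<1$), and then the ratio bound
\[
\bP^{b,p}_{N,1}(E_N,\,T\in\mathcal{T}^*)\le\frac{\sum_{t\in\mathcal{T}^*}\bP_{N,1}(t)\,\phi_{E_N}(t)}{\sum_{t\in\mathcal{T}^*}\bP_{N,1}(t)\,\phi(t)}\le e^{o(N)}\bP_{N,1}(E_N)
\]
follows termwise, with no lower bound on $Z$ (or on $\bP_{N,1}(T\in\mathcal{T}^*)$) ever needed. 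So your approach dispenses entirely with Lemma~\ref{prop:VM}, the Chernoff estimate for $\bP_{N,1}(C)$, and the use of~\eqref{eq:partition_f} in 2b, at the cost of the factor $L$ in the geometric comparison, which is harmless since $L$ is fixed. The paper's proof has the advantage of explicitly exposing the quantitative structure of the long edges that force a small diameter (via Lemma~\ref{prop:VM}), which is in the spirit of the hierarchical picture from \cite{mv}, whereas yours is more self-contained. Both are valid.
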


\begin{proof}
We start assuming that
$$p\in [1,\infty],\; \gamma > 0,\; b < 0.$$
In this case,
\begin{align*}
Z^{b,p}_{N,\gamma} &\ge \sum_{\substack{g \in \mathcal{G}}} \exp\{-N^b\cdot \mathcal{H}_p(g)\} \cdot \mathbb{P}_{N,\gamma}(g) \geq \sum_{\substack{g \in \mathcal{G}}} \exp\{-N^{b+1}\} \cdot \mathbb{P}_{N,\gamma}(g) =  \exp\{-N^{ b+1}\} 
\end{align*}
since $\mathcal{H}_p(G_N) \leq N$ almost surely under $\mathbb{P}_{N,\gamma}$.
Thus, 
\begin{equation*}
\begin{aligned}
\mathbb{P}^{b,p}_{N,\gamma}(E_N) &\le (Z^{b,p}_{N,\gamma})^{-1} \cdot \mathbb{P}_{N,\gamma}(E_N) \le \exp\left\{-\beta N  + N^{b+1} \right\} \xrightarrow{N\to\infty} 0
\end{aligned}
\end{equation*}
since $b < 0$.
This proves part of statement 1 of the corollary; to complete the proof of statement 1, we now assume $$p \in [1,\infty],\;\gamma \in (0,1),\;b\in (-1+\gamma,1).$$ Applying Lemma \ref{lem:VM}, for any $\alpha\in (0,1)$ we obtain the following lower bound for the partition function, for any $\alpha \in (0,1)$:
\begin{align*}
Z^{b,p}_{N,\gamma}&\ge \exp\{-N^b \cdot \mathcal{H}_p(\hat{g}_{\alpha,N})\}\cdot \bP_{N,\gamma}(\hat{g}_{\alpha,N} \text{ is a subgraph of $G_N$})\\
&\ge \exp\{-N^{b+\alpha} -cN^{1-\alpha(1-\gamma)} \}.
\end{align*}
Thus,
\begin{equation*}
\begin{aligned}
\mathbb{P}^{b,p}_{N,\gamma}(E_N) &\le (Z^{b,p}_{N,\gamma})^{-1} \cdot \mathbb{P}_{N,\gamma}(E_N) \le \exp\left\{-\beta N +N^{b+\alpha}+cN^{1-\alpha(1-\gamma)}\right\}.
\end{aligned}
\end{equation*}
Since $b < 1$, setting $\alpha=\frac{1-b}{2-\gamma}$ gives $b+\alpha<1$ and $1-\alpha(1-\gamma)<1$, proving (\ref{cor:1}).

Now suppose $\gamma=1$, $p \in [1,\infty]$ and $b\in [0,1)\backslash \mathcal{E}_p$. Let $k\in \mathbb{N}$ be the unique integer such that
\begin{equation}\label{eq:b}
\frac{k-1}{k}<b<\frac{k}{k+1}.
\end{equation}
From Lemma \ref{lem:VM}, we have
\begin{equation} \begin{split}
Z^{b,p}_{N,1} & \geq \exp\left\{-N^b \cdot \mathcal{H}_p(\doublehat{g}_{k,N}) \right\} \cdot \bP_{N,\gamma} (\text{$\doublehat{g}_{k,N}$ is a subgraph of $G_N$}) \\[.2cm]&\ge \exp\left\{ -3(k+1)N^{b+\frac{1}{k+1}}-kN \right\}=\exp\{-kN+o(N)\},
\end{split}\label{eq:partition_f} \end{equation}
where $o(N)/N\to 0$. The last equality holds by (\ref{eq:b}). Then, for $C=C(b)>k$, we have
$$
\begin{aligned}
\mathbb{P}^{b,p}_{N,\gamma}(E_N) &\le  (Z^{b,p}_{N,\gamma})^{-1}\cdot \mathbb{P}_{N,\gamma}(E_N)\le \exp\{-CN+kN-o(N)\} \xrightarrow{N\to\infty} 0,
\end{aligned}
$$
proving \eqref{cor:2}. To prove (\ref{cor:3}), fix an arbitrary $\frac{1}{k+1}<\alpha < \frac{1}{k}$. Take $\delta > 0$ and $o_1(N)$ as in Lemma \ref{prop:VM}. Define 
$$
\begin{aligned}
B&=\{\cH_p(G_N)\le N^{\alpha}\} \text{ and }\\
C&=\left\{G_N = ([N],E):\sum_{e\in E: |e|\ge N^{\delta}}|e|\ge kN-o_1(N)\right\}.
\end{aligned}
$$
Then,
$$
\mathbb{P}^{b,p}_{N,1}(E_N)=\mathbb{P}^{b,p}_{N,1}(E_N\cap B)+\mathbb{P}^{b,p}_{N,1}(E_N\cap B^c).
$$
From Theorem \ref{thm:old},
$$
\mathbb{P}^{b,p}_{N,1}(E_N\cap B^c) \le \bP^{b,p}_{N,1}(\cH_p(G_N)> N^{\alpha}) \xrightarrow{N\to \infty} 0.
$$
Lemma \ref{prop:VM} claims that $B\subseteq C$. Moreover, since the event $E_N$ depends only on edges with length at most $L$, we can take $N$ large enough so that the events $E_N$ and $C$ are independent under the reference measure $\bP_{N,1}$. Thus,
\begin{equation*}
\begin{aligned}
\bP^{b,p}_{N,1}(E_N\cap B)\le\bP^{b,p}_{N,1}(E_N\cap C)
&\le (Z^{b,p}_{N,1})^{-1}\cdot \bP_{N,1}(E_N)\cdot \bP_{N,1}(C)\\
&\stackrel{\eqref{eq:partition_f}}{\le} \exp\{kN-o(N)\}\cdot \exp\{-cN\}\cdot \bP_{N,1}(C).
\end{aligned}
\end{equation*}
Now, using Chernoff's inequality it can be shown that 
$$\bP_{N,1}(C) < \exp\{-kN + o(N)\};$$
see the proof of Proposition 3.1 in \cite{mv} for the details. This completes the proof of \eqref{cor:2}.
\end{proof}

\subsection{Estimates for the reference measure and proof of Proposition \ref{prop:ingredients}}

We will use the following concentration result for sums of bounded random variables with finite-range dependence. It is a particular case of Theorem 2.1 of \cite{Janson}.
\begin{lemma}\label{lem:janson}
Let $Y_1,\ldots, Y_n$ be random variables such that, for some $m,L > 0$ and for each $i$, $0 \leq Y_i \leq m$ and
$Y_i$ is independent of $\{Y_j: |j-i| > L\}$. Then, letting $X = \sum_{i=1}^n Y_i$, we have
\begin{equation}
\mathbb{P}\left(|X-\mathbb{E}(X)| > t\right) \leq 2 \exp\left\{-\frac{2t^2}{(2L+1)nm^2} \right\}.
\end{equation}
\end{lemma}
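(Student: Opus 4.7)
The plan is to reduce the statement to a Chernoff-type bound for a sum of independent, bounded random variables. First I would split the index set into $2L+1$ arithmetic-progression classes on each of which the $Y_i$'s are mutually independent, then bound the moment generating function of $X-\mathbb{E}(X)$ by combining these classes through Hölder's inequality, and finally optimize a Markov-Chernoff bound in the standard way. Concretely, for each $r\in\{0,1,\ldots,2L\}$ let $I_r=\{i\in\{1,\ldots,n\}:i\equiv r\pmod{2L+1}\}$ and $X_r=\sum_{i\in I_r}Y_i$; any two distinct elements of $I_r$ differ by at least $2L+1>L$, so the finite-range dependence hypothesis implies that the family $\{Y_i:i\in I_r\}$ is mutually independent. (To upgrade from the pairwise-sounding hypothesis to full independence, order the elements of $I_r$ as $i_1<i_2<\cdots$ and argue inductively that $Y_{i_k}$ is independent of the $\sigma$-algebra generated by $Y_{i_1},\ldots,Y_{i_{k-1}}$, which sits inside the $\sigma$-algebra generated by $\{Y_j:|j-i_k|>L\}$.)

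Once independence within each class is in hand, writing $X-\mathbb{E}(X)=\sum_{r=0}^{2L}(X_r-\mathbb{E}(X_r))$ and applying Hölder's inequality with exponent $2L+1$ in each factor gives
\[\mathbb{E}\!\left[e^{\lambda(X-\mathbb{E}(X))}\right]\leq \prod_{r=0}^{2L}\mathbb{E}\!\left[e^{(2L+1)\lambda(X_r-\mathbb{E}(X_r))}\right]^{1/(2L+1)}.\]
Inside each factor the MGF factorizes over $i\in I_r$ by independence, and Hoeffding's lemma applied to $Y_i-\mathbb{E}(Y_i)\in[-m,m]$ yields $\log\mathbb{E}[e^{s(Y_i-\mathbb{E}(Y_i))}]\leq s^2m^2/8$. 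Substituting $s=(2L+1)\lambda$, summing over $i\in I_r$, and then combining the $r$-factors produces the clean estimate $\log\mathbb{E}[e^{\lambda(X-\mathbb{E}(X))}]\leq (2L+1)nm^2\lambda^2/8$. Markov's inequality applied to $e^{\lambda(X-\mathbb{E}(X))}$ followed by optimization over $\lambda>0$ (the optimizer being $\lambda=4t/((2L+1)nm^2)$) gives the upper tail $\mathbb{P}(X-\mathbb{E}(X)>t)\leq\exp\{-2t^2/((2L+1)nm^2)\}$; a symmetric argument with $-\lambda$ handles the lower tail, and a union bound over the two tails supplies the factor of $2$.

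The main obstacle is really the bookkeeping in the first step: verifying that the finite-range dependence hypothesis, interpreted as independence of $Y_i$ from the $\sigma$-algebra generated by $\{Y_j:|j-i|>L\}$, upgrades to full mutual independence within each class $I_r$. Once that is established, everything else is standard machinery (Hölder plus Hoeffding plus Chernoff optimization) and the constants in the exponent drop out exactly as stated. One small subtlety worth emphasizing is that the bound holds regardless of the distributions of the $Y_i$ beyond their range and dependence structure, which is precisely why Hoeffding's lemma (rather than a variance-based Bernstein-type estimate) is the natural ingredient here.
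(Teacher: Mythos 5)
The paper does not prove this lemma; it states it as a special case of Theorem~2.1 in Janson~(2004) and leaves the reader to consult that reference. Your proof is correct and is essentially Janson's own argument specialized to the finite-range setting: the residue classes mod $2L+1$ form a proper $(2L+1)$-cover of the dependency graph by independent sets, H\"older's inequality with exponent $2L+1$ converts the cover into a product bound on the moment generating function, Hoeffding's lemma controls each factor, and Chernoff optimization yields the stated exponent. Your inductive upgrade from the hypothesis ``$Y_i$ is independent of the $\sigma$-algebra generated by $\{Y_j : |j-i|>L\}$'' to full mutual independence within each class is the right point to flag and your treatment of it is sound. One small correction: to get $\log\mathbb{E}\bigl[e^{s(Y_i - \mathbb{E}Y_i)}\bigr] \le s^2 m^2/8$ from Hoeffding's lemma you should observe that $Y_i - \mathbb{E}Y_i$ lies in the interval $[-\mathbb{E}Y_i,\, m - \mathbb{E}Y_i]$ of \emph{length} $m$, not merely in $[-m,m]$; the latter range would only give $s^2 m^2/2$, which would degrade the final constant. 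With that phrasing fixed, the constants match the lemma exactly.
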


We now state and prove two lemmas which give upper bounds to the probabilities of the same events that appear in the two parts of Proposition \ref{prop:ingredients}; however, in these lemmas, the probability measure under  consideration is the reference measure $\bP_{N,\gamma}$ rather than the weighted measure $\bP_{N,\gamma}^{b,p}$.
\begin{lemma}\label{lem:conv}
Let $\gamma > 0$, $k,L > 0$, $(g,o) \in \cGd$ and $\varepsilon > 0$. For $N$ large enough we have 
\begin{equation}\label{eq:prop_conv}
\mathbb{P}_{N,\gamma}\left( \left|\frac{\#\{i \in [N] : B^L_{(G_N,i)}(k)\simeq (g,o)\}}{N} - \mu^L_{\gamma}(k,(g,o)) \right|> \varepsilon \right) < 2\exp\left\{-\frac{\varepsilon^2N}{8kL+2}\right\},
\end{equation}
where $\mu_{\gamma}^L(k,(g,o))$ is as in \eqref{eq:def_mu_trunc}.
\end{lemma}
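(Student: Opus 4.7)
The plan is to apply Lemma \ref{lem:janson} to the sum $X := \sum_{i=1}^{N} Y_i$, where $Y_i := \mathds{1}\{B^L_{(G_N,i)}(k) \simeq (g,o)\}$, so that each $Y_i$ lies in $[0,1]$. Three items need to be checked: a finite range of dependence for the $Y_i$'s, the agreement of $\mathbb{E}[X]$ with $N\mu_\gamma^L(k,(g,o))$ up to a harmless boundary error, and that the resulting bound produces the exponent $-\varepsilon^2 N/(8kL+2)$.

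First I would verify that the $Y_i$'s have range of dependence at most $2kL$ under $\bP_{N,\gamma}$. Since all edges of length greater than $L$ are discarded when forming $B^L_{(G_N,i)}(k)$, any vertex reached from $i$ within $k$ steps in the truncated graph lies within distance $kL$ from $i$ in $\Z$. Hence $Y_i$ is a deterministic function of the indicators of edges $\{x,y\}$ with $|x-y|\le L$ and $x,y\in [i-kL,i+kL]\cap[N]$; as these edge indicators are independent under $\bP_{N,\gamma}$, the variable $Y_i$ is independent of $\{Y_j : |j-i|>2kL\}$, because for such pairs the two relevant edge windows are disjoint.

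Next I would compute $\mathbb{E}[X]$ by exploiting the translation invariance of the edge probabilities \eqref{eq:prob_p_e}. Whenever $kL < i \le N-kL$, the rooted random graph $B^L_{(G_N,i)}(k)$ has the same law (modulo the deterministic translation $\varphi_{i,0}$) as $B^L_{(G,0)}(k)$ under $\bP_\gamma$, so that $\mathbb{E}[Y_i]=\mu_\gamma^L(k,(g,o))$. The at most $2kL$ remaining ``boundary'' indices each contribute a term in $[0,1]$, hence $|\mathbb{E}[X]-N\mu_\gamma^L(k,(g,o))|\le 2kL$. Taking $N$ large enough that $2kL<\varepsilon N/2$, the event in \eqref{eq:prop_conv} is contained in $\{|X-\mathbb{E}[X]|>\varepsilon N/2\}$.

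Finally, I would apply Lemma \ref{lem:janson} with $n=N$, $m=1$, range of dependence $2kL$, and $t=\varepsilon N/2$, which yields $2\exp\{-2(\varepsilon N/2)^2/((4kL+1)N)\}=2\exp\{-\varepsilon^2 N/(8kL+2)\}$, matching \eqref{eq:prop_conv} exactly. I do not expect any real obstacle: the only subtlety is the $2kL$-vertex boundary correction, which is dwarfed by the linear deviation $\varepsilon N$ as soon as $N$ is large.
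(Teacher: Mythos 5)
Your proposal is correct and mirrors the paper's proof essentially line by line: both decompose the count into indicators $Y_i$, observe the $2kL$ range of dependence coming from the edge-length truncation, use translation invariance to control the expectation up to a $2kL$ boundary term, and invoke Lemma \ref{lem:janson} to get the stated bound. The only cosmetic difference is that the paper normalizes $Y_{N,i}$ by $N^{-1}$ (so $m=N^{-1}$, $t=\varepsilon/2$) while you keep $Y_i \in \{0,1\}$ (so $m=1$, $t=\varepsilon N/2$); these produce the identical exponent.
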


\begin{proof} Fix $\gamma$, $k, L$, $(g,o)$ and $\varepsilon$. Also let $N > L$. Define
\begin{align*}
&Y_{N,i} = N^{-1}\cdot\mathds{1}\{B^L_{(G,i)}(k) \simeq (g,o) \},\;i \in [N],\\&X_N = \sum_{i=1}^N Y_{N,i},\qquad \nu^L_{N,\gamma}(k,(g,o)) = \mathbb{E}_{N,\gamma}\left(X_N\right).
\end{align*}
If $1+kL < i < N-kL$, then 
$$\mathbb{P}_{N,\gamma}\left(B^L_{(G_N,i)}(k) \simeq (g,o) \right) =\mu^L_{\gamma}(k,(g,o)),$$
so if $N$ is large enough we have
\begin{equation}|\nu_{N,\gamma}^L(k,(g,o))- \mu^L_{\gamma}(k,(g,o))|<\frac{\varepsilon}{2}.\label{eq:prop_conv15}\end{equation}
Moreover, for each $i \in [N]$ we have $0 \leq Y_{N,i}\leq N^{-1}$ and $Y_{N,i}$ is independent of $\{Y_{N,j}:|j-i|>2kL\}$ under $\mathbb{P}_{N,\gamma}$, so Lemma \ref{lem:janson} yields
\begin{equation}\mathbb{P}_{N,\gamma}\left(|X_N - \nu^L_{N,\gamma}(k,(g,o))| > \frac{\varepsilon}{2}\right) \leq 2\exp\left\{-\frac{2\varepsilon^2N^2}{4(4kL+1)N} \right\} = 2\exp\left\{-\frac{\varepsilon^2N}{8kL+2}\right\}.\label{eq:prop_conv2}
\end{equation}
The result now follows from \eqref{eq:prop_conv15} and \eqref{eq:prop_conv2}.
\end{proof}
\begin{lemma}\label{eq:eloslongos}
For every $\gamma>0$, $N>1$ and $\varepsilon>0$ we have, for $L$ large enough,
\begin{equation*}
\mathbb{P}_{N,\gamma}(\text{$G_N$ has more than $\varepsilon N$ edges with length larger than $L$}) \le \exp\left\{-L^{\gamma/8} N\right\}.
\end{equation*}
\end{lemma}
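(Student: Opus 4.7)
The plan is a routine exponential Chernoff (Cram\'er) argument that exploits the independence of the edge-indicators under the reference measure $\bP_{N,\gamma}$, combined with the fast decay of $p_{\{x,y\}}=\exp\{-|x-y|^\gamma\}$ for long edges. Let $X=\#\{e\in E(G_N):|e|>L\}=\sum_{\{x,y\}\subset[N],\,|x-y|>L}\mathds{1}\{\{x,y\}\in E(G_N)\}$. Under $\bP_{N,\gamma}$, the indicators in this sum are independent Bernoulli variables with parameters $p_{\{x,y\}}=\exp\{-|x-y|^\gamma\}$.

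For any $t>0$, independence and the standard bound $1+(e^t-1)p\le \exp\{(e^t-1)p\}$ give
\begin{equation*}
\bP_{N,\gamma}(X>\varepsilon N)\le e^{-t\varepsilon N}\,\mathbb{E}_{N,\gamma}[e^{tX}]\le \exp\!\left\{-t\varepsilon N+(e^t-1)\,S_L\right\},\qquad S_L:=\sum_{\substack{\{x,y\}\subset[N]\\|x-y|>L}}e^{-|x-y|^\gamma}.
\end{equation*}
The number of pairs in $[N]$ at distance $\ell$ is at most $N$, so $S_L\le N\sum_{\ell>L}e^{-\ell^\gamma}$. Using the elementary inequality $\ell^\gamma\ge \tfrac12 L^\gamma+\tfrac12\ell^\gamma$ valid for $\ell\ge L$, I would bound $\sum_{\ell>L}e^{-\ell^\gamma}\le e^{-L^\gamma/2}\sum_{\ell\ge 1}e^{-\ell^\gamma/2}=C_\gamma\,e^{-L^\gamma/2}$, where $C_\gamma<\infty$ since $\gamma>0$. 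Hence $S_L\le C_\gamma N\,e^{-L^\gamma/2}$.

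Now I would choose $t=L^{\gamma/4}$. Then $e^t-1\le e^{L^{\gamma/4}}$ and
\begin{equation*}
(e^t-1)S_L\le C_\gamma N\exp\!\left\{L^{\gamma/4}-L^\gamma/2\right\}\le N,
\end{equation*}
for $L$ large enough (since $L^\gamma/2$ dominates $L^{\gamma/4}$). Plugging back,
\begin{equation*}
\bP_{N,\gamma}(X>\varepsilon N)\le \exp\!\left\{-\varepsilon L^{\gamma/4} N+N\right\}.
\end{equation*}
Since $\varepsilon L^{\gamma/4}-1\ge L^{\gamma/8}$ for all $L$ sufficiently large (independently of $N>1$), this yields the desired bound $\exp\{-L^{\gamma/8}N\}$.

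There is essentially no conceptual obstacle; the only mild care is in choosing the Chernoff parameter $t$ in the regime where $(e^t-1)S_L$ is still negligible compared to $t\varepsilon N$. Any choice $t=L^{\gamma/4}$ (or, more generally, $L^{\gamma(1/2-\eta)}$ with $\eta>0$) is more than enough to obtain the rate $L^{\gamma/8}$ stated in the lemma; the factor $\gamma/8$ in the target exponent is thus a comfortable, non-optimal choice.
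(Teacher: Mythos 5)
Your proof is correct and follows essentially the same route as the paper: a Chernoff/exponential-moment bound exploiting the independence of the edge indicators under $\bP_{N,\gamma}$, the elementary inequality $1+(e^t-1)p\le e^{(e^t-1)p}$, a bound of $S_L$ by $N$ times a tail sum that decays super-polynomially in $L$, and the same choice of tilting parameter $t=L^{\gamma/4}$. The only cosmetic difference is your explicit splitting $\ell^\gamma\ge\tfrac12 L^\gamma+\tfrac12\ell^\gamma$ to bound the tail sum by $C_\gamma e^{-L^\gamma/2}$, whereas the paper uses the slightly looser bound $e^{-L^{\gamma/2}}$; both suffice with $t=L^{\gamma/4}$ to reach the stated $L^{\gamma/8}$ rate.
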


\begin{proof}
Fix $L\ge 1$ and define 
$$\Lambda_{N,L} = \{\{i,j\}: 1 \leq i < i+L < j \leq N\}.$$
By Chernoff's inequality, for $\theta>0$,
$$
\begin{aligned}
&\mathbb{P}_{N,\gamma}(\text{$G_N$ has more than $\varepsilon N$ edges with length larger than $L$})\\
&\hspace{4cm}\leq \exp\{-\theta \varepsilon N\} \prod_{\{i,j\}\in \Lambda_{N,L}}(p_{\{i,j\}}\exp\{\theta\}+1-p_{\{i,j\}});\end{aligned}$$
we remind the reader that $p_{\{i,j\}} = \exp\{-|j-i|^\gamma\}$. The right-hand side is less than
\begin{align}\nonumber
&\exp\{-\theta \varepsilon N\} \prod_{\{i,j\}\in \Lambda_{N,L}}\exp\{p_{\{i,j\}}(\exp\{\theta\}-1)\}\\&\hspace{3cm}\le \exp\left\{-\theta \varepsilon N +(\exp\{\theta\}-1)\sum_{\{i,j\}\in \Lambda_{N,L}}p_{\{i,j\}}\right\}.\label{eq:reff}
\end{align}
We then bound
$$
\sum_{\{i,j\}\in \Lambda_{N,L}}p_{\{i,j\}}=\sum_{k=L+1}^N (N-k+1)\exp\{-k^{\gamma}\} \le N\sum_{k=L+1}^\infty\exp\{-k^{\gamma}\} < N \exp\{-L^{\gamma/2}\}
$$
if $L$ is large enough.

Choosing $\theta=L^{\frac{\gamma}{4}}$, the expression in \eqref{eq:reff} is at most
$$\exp\left\{ - L^{\frac{\gamma}{4}} \varepsilon N + \exp\{L^{\frac{\gamma}{4}}\} \cdot \exp
\{-L^{\frac{\gamma}{2}} \} \cdot N \right\} < \exp\{-L^\frac{\gamma}{8}N\}$$
if $L$ is large enough.
\end{proof}

\begin{proof}[Proof of Proposition \ref{prop:ingredients}.]
The proposition follows immediately from putting together Corollary \ref{cor} and Lemmas \ref{lem:conv} and \ref{eq:eloslongos}.
\end{proof}

\section{No local convergence for $\gamma>1$, $p < \infty$ and $b > p+1$}

\begin{proof}[Proof of Proposition \ref{prop:main}]
Fix $\gamma>1$, $p\in [1,\infty)$, $b>p+1$ and $L>1$. Also fix $N \in \N$. For $g=([N],E) \in \mathcal{L}_L$ and $k \in \N$,  let $M_{L,k}(g)$ be the set of graphs obtained by removing $k$ edges with length at most $L$ from $g$. For every $g' \in M_{L,k}(g)$ we have
$$
(\mathcal{H}_p(g'))^p\ge (\mathcal{H}_p(g))^p+\frac{k}{\binom{N}{2}}\cdot (2^p-1).
$$
Since $ \mathcal{H}_p(g) \in [1,N]$, the mean value theorem gives
$$
  \mathcal{H}_p(g')- \mathcal{H}_p(g) \ge \frac{( \mathcal{H}_p(g'))^p- (\mathcal{H}_p(g))^p}{p\cdot N^{p-1}}.
$$
Thus, there exists $C_L>0$ such that
$$
g' \in M_{L,k}(g) \quad \Longrightarrow \quad \frac{\bP^{b,p}_{N,\gamma}(G_N=g')}{\bP^{b,p}_{N,\gamma}(G_N=g)} \le \exp\left\{-kN^{b-p-1}\cdot\frac{(2^p-1)}{p}\right\}(C_L)^k.
$$
Noting that $\#M_{L,k}(g) \leq \binom{NL}{k}$, we bound
$$
\begin{aligned}
\sum_{k=1}^{NL}\;\sum_{g' \in M_{L,k}(g)}\frac{\bP^{b,p}_{N,\gamma}(G_N=g')}{\bP^{b,p}_{N,\gamma}(G_N=g)} &\le \sum_{k=1}^{NL}\binom{NL}{k}\exp\left\{-kN^{b-p-1}\cdot\frac{(2^p-1)}{p}\right\}(C_L)^k\\
&\le \sum_{k=1}^{\infty}\left(C_L\cdot N\cdot L\cdot \exp\left\{-N^{b-p-1}\cdot\frac{(2^p-1)}{p}\right\} \right)^k
\xrightarrow{N\to\infty}0
\end{aligned}
$$
since $b > p+1$. Thus,
$$
\begin{aligned}
\bP^{b,p}_{N,\gamma}(G_N \notin \mathcal{L}_L) \leq \sum_{g\in \mathcal{L}_L} \bP^{b,p}_{N,\gamma}(G_N=g) \sum_{k=1}^{NL}\;\sum_{g' \in M_{L,k}(g)}\frac{\bP^{b,p}_{N,\gamma}(G_N=g')}{\bP^{b,p}_{N,\gamma}(G_N=g)}\xrightarrow{N\to\infty}0,\\
\end{aligned}
$$
as desired.
\end{proof}

\section*{Acknowledgments}
The authors would like to thank Aernout van Enter and Jean-Christophe Mourrat for helpful discussions.

\end{document}